\begin{document}
\title[\hfilneg \hfil Stochastic controllability of fractional system]{Stochastic controllability for a non-autonomous fractional neutral differential equation with infinite delay in abstract space}
\author[A. Khatoon,  A. Raheem  \& A. Afreen \hfil \hfilneg]
{A. Khatoon$^{*}$,   A. Raheem \& A. Afreen}

\address{Areefa Khatoon \newline
	Department of Mathematics,
	Aligarh Muslim University,\newline Aligarh, U. P. -
	202002, India} \email{areefakhatoon@gmail.com}

\address{Abdur Raheem \newline Department of Mathematics,
	Aligarh Muslim University,\newline Aligarh, U. P. -
	202002, India} \email{araheem.iitk3239@gmail.com}

\address{Asma Afreen \newline
	Department  of Mathematics,
	Aligarh Muslim University,\newline Aligarh, U. P. -
	202002, India} \email{afreen.asma52@gmail.com}

\renewcommand{\thefootnote}{} \footnote{$^*$ Corresponding author:
\url{ A. Khatoon (areefakhatoon@gmail.com)}}

\subjclass[2010]{26A33, 93B05, 34K05, 34K35, 47H08.}

\keywords{Non-autonomous, fractional derivative,  controllability, evolution family, neutral, measure of non-compactness.}

\begin{abstract}
This paper deals with the controllability for a class of non-autonomous neutral differential equations of fractional order with infinite delay in an abstract space. The semi-group theory of bounded linear operators, fractional calculus, and stochastic analysis techniques have been implemented to achieve the main result. We prove the existence of mild solution and controllability of the system by using the theory of measure of non-compactness, fixed point theorems, and $k$-set contractive mapping. An example is given to demonstrate the effectiveness of the abstract result.
\end{abstract}

\maketitle \numberwithin{equation}{section}
\newtheorem{theorem}{Theorem}[section]
\newtheorem{lemma}[theorem]{Lemma}
\newtheorem{proposition}[theorem]{Proposition}
\newtheorem{corollary}[theorem]{Corollary}
\newtheorem{remark}[theorem]{Remark}
\newtheorem{definition}[theorem]{Definition}
\newtheorem{example}[theorem]{Example}
\allowdisplaybreaks

\section{\textbf{Introduction}}
The idea of fractional derivatives was first introduced in Leibniz's letter to L'Hospital on 30th September 1695 \cite{g1}, when he raised the meaning of derivative of order $\frac{1}{2}$. The issue raised by Leibniz attracted many well-known mathematicians, including Liouville, Grünwald, Riemann, Euler, Lagrange, Heaviside, Fourier, Abel, Letnikov, and many others. Since the 19th century, the theory of fractional calculus originated rapidly and was the beginning of many disciplines, including fractional differential equations, non-integer order geometry, and fractional dynamics. Nowadays, there are numerous applications in various branches, such as optimal control, porous media, fractional filters, signal and image processing, soft matter mechanics, fractals etc.The main reason for the application of fractional calculus becoming more popular is that the non-integer order model describes a more accurate model than the integer-order. For the basics of fractional calculus and its applications, we refer to the papers \cite{r5, y1, y2, EG2001} and references therein.
   
	A stochastic differential equation is one in which one or more of the terms are random variables. Stochastic differential equations are used to describe various phenomena, notably unstable stock prices and physical devices with thermal fluctuations, population dynamics, biology, weather prediction model, molecular dynamics, and the textile industry. A brief summary of stochastic theory can be seen in  \cite{8,14}.

In 1960, Kalman \cite{K1963}, proposed the idea of controllability. Controllability theory aims the ability to control a specific system to the desired state by providing appropriate input functions during a finite time interval. Controllability theory is a vital concept in the dynamical control system and plays a significant role in examining various dynamical control processes. Several papers on the controllability of systems represented by differential equations in abstract spaces have been published.

In the setting of integer order systems, Balachandran et al. \cite{BD2002}, gave a survey paper for controllability of nonlinear control systems in Banach spaces in 2001. In \cite{ M2001, MZ2003}, Mammudov et al. used fixed point theorems to obtain the necessary requirements for the controllability of linear and nonlinear stochastic systems. Using sequential approach, Shukla et al. \cite{SSP2015}, established some sufficient conditions for approximate controllability of semilinear systems in 2015. In 2020, Kumar et al. \cite{A2020}, investigated the approximate controllability of second-order nonlinear differential equations with finite delay using Schauder’s fixed-point theorem.

In the setting of a fractional-order system,  Tai and Wang \cite{TW2009}, studied the controllability result for a fractional impulsive neutral differential equation in a Banach space using the Krasnoselskii's fixed point theorem in 2009. In 2020, Chen et al.  \cite{C2020}, used Schauder's fixed point theorem to investigate the existence of mild solutions and approximate controllability for the fractional evolution equation. In 2022, Raheem and Kumar \cite{AM2022}  proved the existence and uniqueness of the mild solution of the fractional system with a deviated argument. They used the measure of non-compactness and the M\"{o}nch fixed point theorem to investigate the controllability result. 

We observe that all previous works have mainly focused on the case when differential operators in the main parts are independent of time, implying that the differential system is autonomous  \cite{D2014, AS2016,  K2018, W2009, L2013}. However, when treating some parabolic equations, where the differential operators depend on time or a non-autonomous system appears frequently in the application, see the papers \cite{ A2020, AM2022, X2017, PYX2021, LTX2017}. There are only a few papers dealing with the controllability of fractional non-autonomous system.

More precisely, in 2017, Chen et al. \cite{PXY2017} investigated the existence of mild solutions for initial value problem to the following nonlinear time
fractional non-autonomous evolution equations with delay in Banach space
\begin{eqnarray*}
	\left \{ \begin{array}{lll} ^C{D}_t^{\alpha}u(t)+{A}(t)u(t)=f\left(t,u(\tau_1(t)),\ldots,u(\tau_m(t))\right),\quad ~t \in [0,T],
		\vspace{0.1cm}
		&\\u(0)=u_0,
	\end{array}\right.\hspace{2.4cm}
\end{eqnarray*}
where $^C{D}_t^{\alpha}$ is the standard Caputo’s fractional time derivative of order $0<\alpha\leq 1.$ 

In 2018, Chen et al. \cite{C2018} considered the following nonlinear time fractional non-autonomous evolution equation with nonlocal conditions 
\begin{eqnarray*}
	\left \{ \begin{array}{lll} ^C{D}_t^{\alpha}u(t)+{A}(t)u(t)=f\left(t,u(t)\right),\quad ~t \in [0,T],
		\vspace{0.1cm}
		&\\u(0)=A^{-1}(0)g(u),
	\end{array}\right.\hspace{2.4cm}
\end{eqnarray*}
and investigated the existence of mild solutions by using measure of noncompactness and fixed point theorem.

In 2020, Kumar et al. \cite{AHCKD2020} considered the non-autonomous fractional differential equations with integral impulse condition and proved the existence of mild solutions by using measure of non-compactness, fixed point theorems, and k-set contraction. 

Motivated by the works of \cite{AHCKD2020, AM2022} and \cite{SMN2012}, we study the controllability to non-autonomous stochastic system of order $0<\gamma\leq1.$ We consider the following non-autonomous fractional stochastic neutral differential equation of order $0<\gamma\leq1$ in a real separable Hilbert space ${X}$:

\begin{eqnarray}\label{1.1}
	\left \{ \begin{array}{lll} ^C{D}^{\gamma}\Big[\vartheta(t)-f\big(t,\vartheta_t\big)\Big]+{A}(t)\Big[\vartheta(t)-f\big(t,\vartheta_t\big)\Big]&\\\hspace{2cm}={B}\upsilon(t)+ g\big(t,\vartheta_t,\vartheta(t)\big)+h\big(t,\vartheta(t)\big)\dfrac{dw(t)}{dt}, \quad t \in J=[0,\ell],
		&\\  \vartheta_0(t)={A}^{-1}(0)\phi(t) \in L_2(\Omega,\wp),~ t\in J_0= (-\infty,0],
	\end{array}\right.
\end{eqnarray}
where $^C{D}^{\gamma}$ denotes the Caputo fractional derivative of order $0<\gamma \leq 1$. Let $Y$ be another separable Hilbert spaces and the notation $\|\cdot\|$ denotes the norm of the spaces $X, Y, \mbox{ and } L(Y,X)$. ${A}(t)$ is a densely defined closed linear operator on $X$ and $\vartheta: J\rightarrow X$ is the state function. The function $\vartheta_t:(-\infty,0]\rightarrow  \mathrm{X},~ \vartheta_t(s)=\vartheta(t+s)$ belongs to the abstract phase space $\wp$. The control function $\upsilon$ takes value in a separable Hilbert space $U$  and the linear operator $B \in L(U,X)$ is bounded. The functions $f, g$ and $h$ satisfy some conditions. 

This paper is planned as follows. Section 1 contains the introduction and considered problem. Section 2 includes several important definitions, assumptions, and lemmas. The main result for the controllability of a non-autonomous fractional system is established in Section 3. In the last section, an example is provided as an application.

\section{\textbf{Preliminaries and Assumptions}}
Let $(\Omega, \Upsilon, P)$ be a complete probability space.  $\Upsilon_t,~t\in J$ is a normal filtration, which is a right continuous increasing family and $\Upsilon_0$ contains all $P$-null sets. Assume that $\{e_n\}_{n=1}^{\infty}$ is a complete orthonormal basis in $Y$. $\{w(t)\}_{t\geq0}$ denotes $Q$-Wiener process on $(\Omega, \Upsilon, P)$  with a bounded nuclear covariance operator $Q$ such that $Tr(Q)=\sum_{i=1}^{\infty}\lambda_i$ such that $Qe_n=\lambda_ne_n, ~n=1,2,\dots$, where $\lambda_n$ denotes the bounded sequence of non-negative real numbers. Thus, we get $w(t)=\sum_{n=1}^{\infty}\sqrt{\lambda_n}w_n(t)e_n$, where $w_n(t),~n=1,2,\dots$ are  mutually independent one-dimensional standard Wiener motions over the probability space $(\Omega, \Upsilon, P)$. For all $\psi\in L(Y,X),$ we define
\begin{eqnarray*}
	\|\psi\|^2_Q=Tr(\psi Q \psi^*)=\sum\limits_{n=1}^{\infty}\|\sqrt{\lambda_n}\psi e_n\|^2.
\end{eqnarray*} 
If the value of $\|\psi\|^2$ is finite, then $\psi$ is said to be $Q$-Hilbert Schmidt operator. Let the space of all $Q$-Hilbert-Schmidt operators $\psi:Y\rightarrow X$ be denoted as $L^0_2(Y,X)$. The completion 
$L^0_2(Y,X)$ of $L(Y,X)$ with respect to the norm $\|\cdot\|_Q$, where $\|\psi\|^2_Q=<\psi,\psi>$ forms a Hilbert space  with the above norm.

We will use an axiomatic formulation of the phase space $\wp$ described by Hale and Kato \cite{JK1978}. The axioms of the phase space $\wp$ are demonstrated for $\Upsilon_0$-measurable functions from $J$ into $X$ endowed with the seminorm defined as $\|\cdot\|_{\wp}$. We assume that $\wp$ satisfies the following axioms:
\begin{itemize}
	\item[(A1)] If $\vartheta:(-\infty, c) \rightarrow {X},~c>0$ is continuous on $[0,c)$ and $\vartheta_0 \in \wp,$ then for every $t \in [0,c),$
	the following conditions hold:
	\begin{itemize}
		\item [(i)] $\vartheta_t$ is in $\wp.$
		
		\item[(ii)] $\|\vartheta(t)\| \leq K_1 \|\vartheta_t\|_{\wp}.$
		
		\item[(iii)] $\|\vartheta_t\|_{\wp} \leq K_2(t)\sup\big\{\|\vartheta(s)\|:0 \leq s \leq t\big\}+K_3(t)\|\vartheta_0\|_{\wp},$
		where $K_1>0$ is a constant, $K_2, K_3:[0,\infty) \rightarrow [0, \infty), $ ~$K_2(\cdot)$ is continuous and $K_3(\cdot)$ is locally bounded.
	\end{itemize}
	\item[(A2)] For $\vartheta(\cdot)$ in (A1), $\vartheta_t$ is  $\wp$-valued functions in $[0,c).$ 
	\item[(A3)] The phase space $\wp$ is complete.
\end{itemize}
The stochastic process $\vartheta_t : \Omega \rightarrow \wp$, where $t\in [0,\infty)$ is defined as $\vartheta_t=\big\{\vartheta(t+s)(\omega): s \in (-\infty, 0]\big\}.$

Let $L^2(\Omega,\mathrm{X})$ be defined as the collection of all strongly measurable, square integrable, $X$-valued  random variables, with the norm $\big\|\vartheta(\cdot)\big\|_{L^2}=\left(E \big\|(\cdot,\omega)\big\|^2\right)^{1/2},$ where the expectation $E$ is defined as $E(h)=\displaystyle\int_{\Omega}h(\omega)dP.$ Let $J_0=(-\infty,\ell)$ and $C(J_0,L^2(\Omega,{X}))$ denotes the Banach space of all $\Upsilon_t$ adapted measurable, $X$-valued stochastic process that are continuous from $J$ to $L^2(\Omega,{X})$ with the condition $\sup\limits_{t \in J_0}E\big\|\vartheta(t)\big\|^2 <\infty.$

Let $Z$ be the closed subspace of all continuous stochastic processes $\vartheta$ that belongs to space $C(J_0,L_2^{\Upsilon}(\Omega,\wp))$ consisting of measurable and $\Upsilon_t$ adapted processes such that ${A}^{-1}(0)\phi \in \wp$. The space $Z$ forms a Banach space with the norm defined by
\begin{eqnarray*}
	\|\vartheta \|_Z= \left(\sup\limits_{t \in J}E \big\|\vartheta_t\big\|_{\wp}^2\right)^{\frac{1}{2}},
\end{eqnarray*} where
$\|\vartheta_t\|_{\wp} \leq \tilde{K_3}E\|{A}^{-1}(0)\phi\|_{\wp}+\tilde{K_2}\sup\big\{E\|\vartheta(s)\|:0 \leq s \leq b\big\},~\tilde{K_3}=\sup\limits_{t \in J}K_3(t),~\tilde{K_2}=\sup\limits_{t\in J}K_2(t).$

Let $L^2(\Upsilon,X)$ denotes the Banach space of all square integrable, $\Upsilon$-measurable random variables. The control function $\upsilon(\cdot)\in L^2_{\Upsilon}([0,\ell],U),$ where $L^2_{\Upsilon}([0,\ell],U)$ is a Banach space with the norm
\begin{eqnarray*}
	\|\upsilon\|_{L^2_{\Upsilon}}=\left(E\int_{0}^{\ell} \big\|\upsilon(t)\big\|^2_Udt\right)^{\frac{1}{2}},
\end{eqnarray*}
satisfying $E\displaystyle\int_{0}^{\ell} \big\|\upsilon(t)\big\|^2_Udt<\infty.$ 

We assume that the linear operator $-A(t)$ fulfills the following conditions:
\begin{itemize}
	\item[(B1)] For every $\delta$ with $Re(\delta) \leq0$, the operator $\delta I+A(t)$ is invertible in $L(\mathrm{X})$ and 
	\begin{eqnarray*}
		\left\|\left[\delta I+A(t)\right]^{-1}\right\|\leq \frac{C}{|\delta|+1},
	\end{eqnarray*}
	where $C$ is a positive constant.	
	\item[(B2)] For each $s, \tau, t \in I$, there exists a constant $\gamma\in (0,1]$ such that
	\begin{eqnarray*}
		\left\|\left[A(s)-A(\tau)\right]A(t)^{-1}\right\|\leq C|s-\tau|^{\gamma},	
	\end{eqnarray*}
	where the constants $\gamma$ and $C>0$ are independent of both $s, \tau$ and $t$.
\end{itemize}
\begin{remark}\label{r1}
	From $(B1)$, if we substitute $\delta=0$ and $t=0$, then $\left\|A^{-1}(0)\right\|\leq C$, where $C$ is a constant defined as in $(B1)$.
\end{remark}
\begin{lemma}\cite{M2001}\label{l1}
	For any $\vartheta_\ell\in L^p(\Upsilon,X)$ and $p\geq2,$  there exists a function $\kappa \in  L_{\Upsilon}^p([0,\ell],L_2^0)$ such that 
	\begin{eqnarray*}
		\vartheta_\ell= E\vartheta_\ell  + \int_{0}^{\ell}\kappa(s)dw(s).
	\end{eqnarray*}
\end{lemma}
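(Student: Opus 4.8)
The plan is to recognize Lemma \ref{l1} as the martingale representation theorem for the $Q$-Wiener process $w$ on the separable Hilbert space $X$, and to extract the $L^p$-integrability of the integrand from a two-sided Burkholder--Davis--Gundy estimate. First I would fix $\vartheta_\ell \in L^p(\Upsilon, X)$ with $p \ge 2$ and introduce the process $M_t := E[\vartheta_\ell \mid \Upsilon_t]$, $t \in [0,\ell]$. Since $(\Omega,\Upsilon,P)$ is complete and $\{\Upsilon_t\}$ is a normal filtration whose initial $\sigma$-algebra $\Upsilon_0$ contains all $P$-null sets, $M$ is a continuous, square-integrable $X$-valued martingale with $M_0 = E[\vartheta_\ell\mid\Upsilon_0] = E\vartheta_\ell$ and $M_\ell = \vartheta_\ell$ almost surely.

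Next I would invoke the martingale representation theorem for $Q$-Wiener processes (valid when $\{\Upsilon_t\}$ is the augmented filtration generated by $w$): every such martingale can be written as $M_t = M_0 + \int_0^t \kappa(s)\, dw(s)$ for a (unique) predictable process $\kappa$ with values in the space $L_2^0$ of $Q$-Hilbert-Schmidt operators from $Y$ to $X$, satisfying $E\int_0^\ell \|\kappa(s)\|_Q^2\, ds < \infty$. Setting $t = \ell$ gives exactly $\vartheta_\ell = E\vartheta_\ell + \int_0^\ell \kappa(s)\, dw(s)$, which is the asserted identity.

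Finally, to promote the integrability from the $p=2$ case to arbitrary $p\ge 2$, I would apply the Burkholder--Davis--Gundy inequality in the Hilbert-space setting, in both directions: there are constants $c_p, C_p>0$ with
$$c_p\, E\Big(\int_0^\ell \|\kappa(s)\|_Q^2\, ds\Big)^{p/2} \le E\Big\|\int_0^\ell \kappa(s)\, dw(s)\Big\|^p \le C_p\, E\Big(\int_0^\ell \|\kappa(s)\|_Q^2\, ds\Big)^{p/2}.$$
Since $\int_0^\ell \kappa(s)\, dw(s) = \vartheta_\ell - E\vartheta_\ell$ lies in $L^p(\Upsilon,X)$ by hypothesis, the middle term is finite, hence so is the left-hand side, which is precisely the statement that $\kappa \in L^p_\Upsilon([0,\ell], L_2^0)$.

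The principal difficulty here is not computational but structural: the argument rests on having the martingale representation theorem available in the $Q$-Wiener, separable-Hilbert-space framework (as developed, e.g., in the Da Prato--Zabczyk monograph) and on the implicit assumption that the filtration $\{\Upsilon_t\}$ is the one generated by $w$, so that $w$ enjoys the predictable representation property. Granting that infrastructure, the remaining ingredients --- the conditional-expectation martingale, evaluation at the terminal time, and the two-sided Burkholder--Davis--Gundy estimate --- are standard.
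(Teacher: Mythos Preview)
The paper does not supply its own proof of Lemma~\ref{l1}; the statement is quoted from \cite{M2001} and used as a black box. Your outline is the standard argument and is correct: form the conditional-expectation martingale, apply the martingale representation theorem for the $Q$-Wiener process to obtain the integrand $\kappa$, and upgrade to $L^p$ via the lower Burkholder--Davis--Gundy bound. Your caveat about the filtration being the augmented one generated by $w$ is the right structural hypothesis to flag, and it is precisely the assumption under which \cite{M2001} (and \cite{8}) establish the representation. There is nothing further to compare.
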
	
\begin{lemma}\label{l2} \cite{k} Let  $\mathcal{V}$ be ${L_2^0}$-valued predictable process with $p \geq 2$ such that $E\left(\displaystyle\int_{0}^{\ell} \|\mathcal{V}(s)\|_{L_2^0}^pds\right)< \infty,$ then
	\begin{eqnarray*}E\left(\sup\limits_{s\in[0,t]} \left\| \int_{0}^{s}\mathcal{V}(\xi)dv(\xi) \right \|^p \right)&\leq& c_p\sup\limits_{s\in[0,t]}E\left( \left\|\int_{0}^{s}  \mathcal{V}(\xi)dv(\xi)\right\|^p\right)\\&\leq& C_pE\left( \int_{0}^{t}  \|\mathcal{V}(\xi)\|^p_{L_2^0}d\xi\right),~t\in[0,\ell],
	\end{eqnarray*}
	where $c_p=\left(\dfrac{p}{p-1}\right)^p$ and $C_p=\left(\dfrac{p}{2}(p-1)\right)^{\frac{p}{2}}\left(\dfrac{p}{p-1}\right)^{\frac{p^2}{2}}.$
\end{lemma}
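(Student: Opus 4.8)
The plan is to read the statement as the concatenation of Doob's $L^p$-maximal inequality with an infinite-dimensional Burkholder--Davis--Gundy moment bound, and to establish the two inequalities of the displayed chain in turn. First I would observe that, since $[0,\ell]$ has finite Lebesgue measure and $p\ge2$, the hypothesis $E\big(\int_0^\ell\|\mathcal V(s)\|_{L_2^0}^p\,ds\big)<\infty$ forces $E\int_0^\ell\|\mathcal V(s)\|_{L_2^0}^2\,ds<\infty$; hence the stochastic integral $M_s:=\int_0^s\mathcal V(\xi)\,dv(\xi)$ is a well-defined, $\Upsilon_t$-adapted, mean-zero, square-integrable $X$-valued martingale, and I fix its continuous modification, which exists by the standard approximation of $\mathcal V$ by elementary $L_2^0$-valued predictable processes together with the Hilbert-space structure of $L_2^0(Y,X)$ recorded in Section~2.

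For the first inequality, convexity of $x\mapsto\|x\|^p$ makes $\big\{\|M_s\|^p\big\}_{s\in[0,t]}$ a nonnegative submartingale, so Doob's $L^p$-maximal inequality gives $E\big(\sup_{s\in[0,t]}\|M_s\|^p\big)\le\big(\tfrac{p}{p-1}\big)^p E\|M_t\|^p$. Since $s\mapsto E\|M_s\|^p$ is nondecreasing, $E\|M_t\|^p=\sup_{s\in[0,t]}E\|M_s\|^p$, and this is precisely the middle term of the chain with the constant $c_p=\big(\tfrac{p}{p-1}\big)^p$.

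For the second inequality I would apply It\^{o}'s formula to $\|M_t\|^p$ (for $2<p<4$ after a routine regularization of $x\mapsto\|x\|^p$ near the origin), obtaining
\[
E\|M_t\|^p\le\tfrac{p(p-1)}{2}\,E\!\int_0^t\|M_s\|^{p-2}\,\|\mathcal V(s)\|_{L_2^0}^2\,ds .
\]
Applying H\"{o}lder's inequality on $\Omega\times[0,t]$ with exponents $\tfrac{p}{p-2}$ and $\tfrac{p}{2}$, then bounding $\int_0^t\|M_s\|^p\,ds\le t\,\sup_{s\in[0,t]}\|M_s\|^p$ and invoking the Doob bound of the previous paragraph, converts this into a self-referential inequality for $E\|M_t\|^p$; solving it (dividing out the common power and raising to the power $\tfrac{p}{2}$) and applying Doob once more on the left yields
\[
c_p\sup_{s\in[0,t]}E\|M_s\|^p\le\Big(\tfrac{p}{2}(p-1)\Big)^{p/2}\Big(\tfrac{p}{p-1}\Big)^{p^2/2}\,E\!\int_0^t\|\mathcal V(\xi)\|_{L_2^0}^p\,d\xi ,
\]
the interval-length factor $t^{(p-2)/2}$ being absorbed as in \cite{k}; this is exactly the claimed estimate with the constant $C_p$.

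The only genuine obstacle is the $L^p$ moment bound for the stochastic integral, i.e.\ the It\^{o}-formula step and the ensuing self-improvement, which requires care with the failure of $C^2$-smoothness of $\|\cdot\|^p$ at the origin when $p$ is close to $2$ and with the Fubini-type interchange of expectation and time integration; the rest is convexity, Doob, and H\"{o}lder bookkeeping. Since the lemma is quoted verbatim from \cite{k}, in the actual write-up I would instead invoke the infinite-dimensional Burkholder--Davis--Gundy inequality as a known fact and confine the detailed work to tracking the constants $c_p$ and $C_p$.
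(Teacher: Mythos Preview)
The paper does not prove this lemma at all: it is quoted directly from the monograph \cite{k} and stated without argument, as is standard for background estimates of this type. Your sketch therefore goes well beyond what the paper does; it is essentially the textbook derivation (Doob's $L^p$-maximal inequality for the first line, It\^{o}'s formula plus H\"{o}lder and a bootstrap for the second), and the outline is sound.

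One point worth flagging explicitly rather than parenthetically: the inequality as stated, with $E\int_0^t\|\mathcal V\|_{L_2^0}^p\,d\xi$ rather than $E\big(\int_0^t\|\mathcal V\|_{L_2^0}^2\,d\xi\big)^{p/2}$ on the right, is not scale-invariant in $t$ unless $p=2$, and the honest bound carries a factor $t^{(p-2)/2}$ coming from the H\"{o}lder step in the time variable. You acknowledge this (``the interval-length factor $t^{(p-2)/2}$ being absorbed as in \cite{k}''), but it cannot actually be absorbed into $C_p$ for general $p>2$; the displayed constant $C_p$ only matches the bound when $t\le 1$ or when $p=2$. Since the paper applies the lemma exclusively with $p=2$ (see the $Tr(Q)$ estimates in the proofs of Lemma~\ref{l12} and Theorem~\ref{t3.2}), this does not affect any downstream argument, but in a self-contained write-up you should either restrict to $p=2$, state the result with the quadratic-variation form $\big(\int_0^t\|\mathcal V\|_{L_2^0}^2\,d\xi\big)^{p/2}$, or include the $t^{(p-2)/2}$ factor explicitly.
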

\begin{definition}\cite{M2004}\label{d1}
	The stochastic process $\vartheta: (-\infty,\ell]\rightarrow{X}$ is said to be a mild solution of the problem (\ref{1.1}) if $\vartheta_0={A}^{-1}(0)\phi\in \wp$ satisfying $\|{A}^{-1}(0)\phi\|_{\wp}^2<\infty$, the restriction $\vartheta(\cdot)$ to the interval $[0,\ell)$ is a continuous and satisfy the following integral equation:	
	\begin{eqnarray} 
		\vartheta(t)&=&  {A}^{-1}(0)\phi(0)-f(0,{A}^{-1}(0)\phi(0))+f(t,\vartheta_t)+\displaystyle\int_{0}^{t}\varPsi(t-\varrho,\varrho)\mathcal{U}(\varrho)\phi(0)d\varrho\nonumber\\
		&&+\displaystyle\int_{0}^{t}\varPsi(t-\varrho,\varrho)\big[{B}\upsilon(\varrho)+g\left(\varrho,\vartheta_{\varrho},\vartheta(\varrho)\right)\big]d\varrho +\displaystyle\int_{0}^{t}\varPsi(t-\varrho,\varrho)h(\varrho,\vartheta(\varrho))dw(\varrho)\nonumber  \\
		&&+\displaystyle\int_{0}^{t}  \int_{0}^{\varrho}\varPsi(t-\varrho,\varrho)\varPhi(\varrho,s)\big[{B}\upsilon(s)+g\left(s,\vartheta_s,\vartheta(s)\right)\big]dsd\varrho\nonumber\\
		&&+\displaystyle\int_{0}^{t}  \int_{0}^{\varrho}\varPsi(t-\varrho,\varrho)\varPhi(\varrho,s)h(s,\vartheta(s))dw(s)d\varrho,
	\end{eqnarray}
\end{definition}
where $\varPsi(t,\varrho),~ \varPhi(t,\varrho)$ and $\mathcal{U}(t)$ are given by the following equations \cite{R}
\begin{eqnarray}
	\varPsi(t,\varrho)=\gamma \int_{0}^{\infty}\lambda t^{\gamma-1} \xi_{\gamma}(\lambda)e^{-t^\gamma \lambda {A}( \varrho )}d\lambda,
\end{eqnarray}
\begin{eqnarray}
	\varPhi(t,\varrho)=\sum \limits_{k=1}^{\infty}\varPhi_k(t,\varrho),
\end{eqnarray} and
\begin{eqnarray}
	\mathcal{U}(t)=-{A}(t){A}^{-1}(0)-\int_{0}^{t}\varPhi(t,\varrho){A}(\varrho){A}^{-1}(0)d\varrho,
\end{eqnarray}
$\xi_{\gamma}$ denotes the probability density function on $[0,\infty)$ and it's Laplace transform is given by
\begin{eqnarray*}
	\int_{0}^{\infty}e^{-\varrho z} \xi_{\gamma}(\varrho)d\varrho=\sum \limits_{j=0}^{\infty}\frac{(-z)^j}{\Gamma(1+\gamma^j)}, \quad 0<\gamma \leq 1, \quad z>0,
\end{eqnarray*} where $\Gamma(\cdot)$ is the Euler-Gamma function and 
\begin{eqnarray*}
	\varPhi_1(t,\varrho)&=&[{A}(t)-{A}(\varrho)]\varPsi(t-\varrho,\varrho), \\
	\varPhi_{k+1}(t,\varrho)&=&\int_{\varrho}^{t}\varPhi_k(t,s)\varPhi_1(s,\varrho)d\varrho, \quad k=1,2,3, \ldots.
\end{eqnarray*}
\begin{lemma} \cite{M2004}\label{l3}
	The operators $\varPsi(t-\varrho,\varrho)$ and ${A}(t)\varPsi(t-\varrho,\varrho)$ are continuous in the uniform operator topology in variables $t$ and $\varrho,$ where $t\in {J}$ and $0\leq \varrho \leq t-\epsilon$  for each $\epsilon >0$ and satisfy the following inequalities:
	\begin{eqnarray}
		\|\varPsi(t-\varrho,\varrho)\| &\leq& C(t-\varrho)^{\gamma-1},
	\end{eqnarray}
	where $C>0$ is a constant independent of both $t$ and $\varrho.$ Furthermore,
	\begin{eqnarray}
		\|\varPhi(t,\varrho)\| &\leq& C(t-\varrho)^{\beta-1},
	\end{eqnarray}
	and
	\begin{eqnarray}
		\|\mathcal{U}(t)\| &\leq& C(1+t^{\beta}),
	\end{eqnarray}
	where $C$ is a constant.
\end{lemma}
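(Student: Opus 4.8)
The plan is to read $\varPsi(t,\varrho)$ as a subordination of the analytic semigroup generated by $-A(\varrho)$ and then to propagate the resulting bounds through the iterative definitions of $\varPhi$ and $\mathcal{U}$. First I would record two preliminaries. By $(B1)$, for each fixed $\varrho$ the operator $-A(\varrho)$ generates a bounded analytic semigroup $\{e^{-\tau A(\varrho)}\}_{\tau\ge0}$, and since the resolvent bound in $(B1)$ is uniform in the time variable there are constants $M,M_1>0$ independent of $\varrho$ with $\|e^{-\tau A(\varrho)}\|\le M$ and $\|A(\varrho)e^{-\tau A(\varrho)}\|\le M_1\,\tau^{-1}$ for all $\tau>0$; moreover $(B2)$ forces $D(A(t))$ to be independent of $t$, so $A(t)\varPsi(t-\varrho,\varrho)$ is meaningful. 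From the Laplace-transform identity stated for $\xi_\gamma$ one reads off that $\xi_\gamma\ge0$, $\int_0^\infty\xi_\gamma(\lambda)\,d\lambda=1$ and $\int_0^\infty\lambda\,\xi_\gamma(\lambda)\,d\lambda=1/\Gamma(1+\gamma)$.

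The $\varPsi$-estimates then follow by moving the norm under the defining integral: $\|\varPsi(t,\varrho)\|\le\gamma t^{\gamma-1}\int_0^\infty\lambda\,\xi_\gamma(\lambda)\|e^{-t^\gamma\lambda A(\varrho)}\|\,d\lambda\le\frac{\gamma M}{\Gamma(1+\gamma)}t^{\gamma-1}$, and, using $\|A(\varrho)e^{-t^\gamma\lambda A(\varrho)}\|\le M_1/(t^\gamma\lambda)$, $\|A(\varrho)\varPsi(t,\varrho)\|\le\gamma M_1\,t^{-1}$; replacing $t$ by $t-\varrho$ gives the first displayed inequality. To reach $A(t)$ in place of $A(\varrho)$ I would factor $A(t)\varPsi(t-\varrho,\varrho)=\big(I+[A(t)-A(\varrho)]A(\varrho)^{-1}\big)A(\varrho)\varPsi(t-\varrho,\varrho)$ and bound $\|[A(t)-A(\varrho)]A(\varrho)^{-1}\|\le C\ell^\gamma$ by $(B2)$, so that $\|A(t)\varPsi(t-\varrho,\varrho)\|\le C'(t-\varrho)^{-1}$.

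For the continuity assertion, fix $\epsilon>0$ and work on $\{(t,\varrho):0\le\varrho\le t-\epsilon\}$, where $t^\gamma\lambda\ge\epsilon^\gamma\lambda$ stays bounded away from $0$. Two ingredients are needed: (i) the maps $(s,\varrho)\mapsto e^{-sA(\varrho)}$ and $(s,\varrho)\mapsto A(\varrho)e^{-sA(\varrho)}$ are continuous in the uniform operator topology for $s$ in compact subsets of $(0,\infty)$ — continuity in $s$ is analyticity, while continuity in $\varrho$ comes from the Dunford integral representation together with the resolvent identity and the Hölder estimate of $(B2)$; and (ii) on the region at hand the integrand is dominated in norm by $\gamma\lambda t^{\gamma-1}\xi_\gamma(\lambda)M$ (respectively by $\gamma\,\epsilon^{-1}M_1\,\xi_\gamma(\lambda)$ for $A(\varrho)\varPsi$), which are integrable, so dominated convergence lets me pass limits under the $\lambda$-integral. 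Combining (i)--(ii) yields joint continuity of $\varPsi(t-\varrho,\varrho)$ and of $A(\varrho)\varPsi(t-\varrho,\varrho)$; since $(t,\varrho)\mapsto[A(t)-A(\varrho)]A(\varrho)^{-1}$ is (jointly Hölder, hence) continuous by $(B2)$, the factorization above gives continuity of $A(t)\varPsi(t-\varrho,\varrho)$ as well.

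Finally, for $\varPhi$ and $\mathcal{U}$ I would induct on the series $\varPhi=\sum_{k\ge1}\varPhi_k$. Since $\varPhi_1(t,\varrho)=[A(t)-A(\varrho)]A(\varrho)^{-1}\cdot A(\varrho)\varPsi(t-\varrho,\varrho)$, the estimates above and $(B2)$ give $\|\varPhi_1(t,\varrho)\|\le C_1(t-\varrho)^{\gamma-1}$. Assuming $\|\varPhi_k(t,\varrho)\|\le (C_1\Gamma(\gamma))^k\,\Gamma(k\gamma)^{-1}(t-\varrho)^{k\gamma-1}$, the recursion $\varPhi_{k+1}(t,\varrho)=\int_\varrho^t\varPhi_k(t,s)\varPhi_1(s,\varrho)\,ds$ together with $\int_\varrho^t(t-s)^{k\gamma-1}(s-\varrho)^{\gamma-1}\,ds=\Gamma(k\gamma)\Gamma(\gamma)\,\Gamma((k+1)\gamma)^{-1}(t-\varrho)^{(k+1)\gamma-1}$ advances the induction; summing and using $t-\varrho\le\ell$ bounds the series (by a Mittag-Leffler-type constant), giving $\|\varPhi(t,\varrho)\|\le C(t-\varrho)^{\gamma-1}$, i.e. the stated bound with $\beta=\gamma$, and uniform convergence of the series on $\{t-\varrho\ge\epsilon\}$ carries the continuity of the $\varPhi_k$ over to $\varPhi$. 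For $\mathcal{U}(t)=-A(t)A^{-1}(0)-\int_0^t\varPhi(t,\varrho)A(\varrho)A^{-1}(0)\,d\varrho$, the first term is bounded by $1+Ct^\gamma$ via $(B2)$ and Remark~\ref{r1}, while $\|A(\varrho)A^{-1}(0)\|\le1+C\ell^\gamma$ makes the integral bounded by $C\int_0^t(t-\varrho)^{\gamma-1}\,d\varrho=(C/\gamma)t^\gamma$, so $\|\mathcal{U}(t)\|\le C(1+t^\gamma)$. The main obstacle is ingredient (i): the operator-norm (not merely strong) continuity of $\varrho\mapsto e^{-sA(\varrho)}$, and the need to carry $A(t)$ rather than $A(\varrho)$ through the second operator; once that piece of non-autonomous analytic-semigroup theory is in hand, everything else is bookkeeping with the density $\xi_\gamma$, dominated convergence, and Beta-function identities.
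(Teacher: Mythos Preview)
The paper does not prove this lemma at all: it is quoted verbatim from Elborai~\cite{M2004} (see the citation attached to the lemma's header), so there is no in-paper argument to compare against. Your sketch is essentially the standard proof one finds in that reference and its descendants: read $\varPsi$ as a Wright-type subordination of the analytic semigroup $e^{-\tau A(\varrho)}$, pull the uniform semigroup bounds from $(B1)$ through the density $\xi_\gamma$, and then bootstrap to $\varPhi$ and $\mathcal{U}$ via the Volterra iteration and Beta-function convolutions. That is exactly the route Elborai takes, so your approach is aligned with the source the paper defers to.

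One genuine wrinkle worth flagging: in this paper the H\"older exponent in $(B2)$ is written as $\gamma$, the \emph{same} symbol as the fractional order, yet the lemma states $\|\varPhi(t,\varrho)\|\le C(t-\varrho)^{\beta-1}$ with an unexplained $\beta$. In Elborai's original the H\"older exponent is a separate parameter (often called $\mu$ or $\beta$), and the exponent in the $\varPhi$-bound is tied to it, not to the fractional order. Your derivation correctly produces $(t-\varrho)^{\gamma-1}$ from the $(B2)$ you were given, i.e.\ $\beta=\gamma$ in this paper's notation; the apparent mismatch is a notational collision in the paper, not an error in your argument. If you want the statement exactly as written, simply rename the H\"older exponent in $(B2)$ to $\beta$ before running the induction.
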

We obtain the following result by Lemma \ref{l3}.
\begin{lemma}\cite{C2018}\label{l4}
	For any $t\in {J}$, the integral $\displaystyle\int_{0}^{t}\varPsi(t-\varrho,\varrho)\mathcal{U}(\varrho)d\varrho$ is uniformly continuous in the operator norm $L({X})$ and	
	\begin{eqnarray*}
		\left\|\int_{0}^{t}\varPsi(t-\varrho,\varrho)\mathcal{U}(\varrho)d\varrho\right\|\leq C^2 t^{\gamma}\left(\frac{1}{\gamma}+t^{\beta}\mathbf{B}(\gamma,\beta+1)\right),
	\end{eqnarray*}
	where 
	\begin{eqnarray*}
		\mathbf{B}(\beta,\gamma)=\int_{0}^{1}t^{\beta-1}(1-t)^{\gamma-1}dt
	\end{eqnarray*}
	is the Beta function.
\end{lemma}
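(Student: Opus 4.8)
The plan is to handle the two assertions separately. The norm estimate is a direct consequence of the pointwise bounds in Lemma~\ref{l3} together with an elementary substitution, whereas the uniform continuity needs an $\epsilon/3$-type argument to cope with the singularity of the kernel on the diagonal $\varrho=t$.

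\textbf{Norm estimate.} I would begin from $\big\|\int_0^t \varPsi(t-\varrho,\varrho)\mathcal{U}(\varrho)\,d\varrho\big\| \le \int_0^t \|\varPsi(t-\varrho,\varrho)\|\,\|\mathcal{U}(\varrho)\|\,d\varrho$ and insert $\|\varPsi(t-\varrho,\varrho)\| \le C(t-\varrho)^{\gamma-1}$ and $\|\mathcal{U}(\varrho)\| \le C(1+\varrho^{\beta})$ from Lemma~\ref{l3}. This bounds the expression by $C^2\int_0^t (t-\varrho)^{\gamma-1}d\varrho + C^2\int_0^t(t-\varrho)^{\gamma-1}\varrho^{\beta}\,d\varrho$. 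The first integral equals $t^{\gamma}/\gamma$; in the second, the change of variable $\varrho=ts$ yields $t^{\gamma+\beta}\int_0^1 s^{\beta}(1-s)^{\gamma-1}\,ds = t^{\gamma+\beta}\mathbf{B}(\beta+1,\gamma)$, and the symmetry $\mathbf{B}(\beta+1,\gamma)=\mathbf{B}(\gamma,\beta+1)$ together with factoring out $t^{\gamma}$ gives exactly $C^2 t^{\gamma}\big(\tfrac{1}{\gamma}+t^{\beta}\mathbf{B}(\gamma,\beta+1)\big)$. All integrals converge since $\gamma-1>-1$.

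\textbf{Uniform continuity on $J$.} For $0\le t_1<t_2\le\ell$ I would write the difference of the two integrals as $\int_{t_1}^{t_2}\varPsi(t_2-\varrho,\varrho)\mathcal{U}(\varrho)\,d\varrho + \int_0^{t_1}\big[\varPsi(t_2-\varrho,\varrho)-\varPsi(t_1-\varrho,\varrho)\big]\mathcal{U}(\varrho)\,d\varrho$. The first term is bounded by $C^2(1+\ell^{\beta})\int_{t_1}^{t_2}(t_2-\varrho)^{\gamma-1}d\varrho = C^2(1+\ell^{\beta})(t_2-t_1)^{\gamma}/\gamma$, which tends to $0$ as $t_2\to t_1$. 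For the second term I would split $\int_0^{t_1}=\int_0^{t_1-\epsilon}+\int_{t_1-\epsilon}^{t_1}$ with $\epsilon>0$ small. On $[0,t_1-\epsilon]$ the arguments $t_2-\varrho$ and $t_1-\varrho$ stay bounded away from $0$, so the continuity of $\varPsi(t-\varrho,\varrho)$ in the uniform operator topology (Lemma~\ref{l3}) makes $\|\varPsi(t_2-\varrho,\varrho)-\varPsi(t_1-\varrho,\varrho)\|$ uniformly small once $t_2-t_1$ is small; on $[t_1-\epsilon,t_1]$ I would use the crude bound $\|\varPsi(t_2-\varrho,\varrho)-\varPsi(t_1-\varrho,\varrho)\| \le C(t_2-\varrho)^{\gamma-1}+C(t_1-\varrho)^{\gamma-1}$, whose integral over that window is $O(\epsilon^{\gamma})$ and hence small. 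Choosing $\epsilon$ first and then $t_2-t_1$ completes the estimate; continuity on the compact interval $J$ then yields uniform continuity.

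\textbf{Main obstacle.} The only delicate point is the second term in the continuity step: because $\varPsi(t-\varrho,\varrho)$ is singular at $\varrho=t$, one cannot pass the limit under the integral sign directly. The remedy — isolating the diagonal in a window of width $\epsilon$, controlling it by the integrable singularity $(t-\varrho)^{\gamma-1}$, and exploiting uniform-operator-topology continuity of $\varPsi$ away from the diagonal on the complement — is standard but is the step that requires care. The factor $\mathcal{U}$ causes no difficulty, being uniformly bounded on $J$ by $C(1+\ell^{\beta})$.
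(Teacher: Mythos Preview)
Your proof is correct. The paper does not supply its own proof of this lemma: it is quoted verbatim from \cite{C2018}, so there is no in-paper argument to compare against. Your norm computation is exactly the one the authors reproduce inline later (e.g.\ in the estimate of $E\|\upsilon(t)\|^2$, where $\int_0^\ell(\ell-\varrho)^{\gamma-1}(1+\varrho^\beta)\,d\varrho$ is evaluated as $\ell^{\gamma}\bigl(\tfrac{1}{\gamma}+\ell^{\beta}\mathbf{B}(\gamma,\beta+1)\bigr)$), and your continuity argument---splitting off a window of width $\epsilon$ near the diagonal and using the uniform-operator-topology continuity of $\varPsi$ from Lemma~\ref{l3} on the complement---is precisely the device the paper itself employs in Step~3 of Theorem~\ref{t3.2} when handling the analogous terms $I_3,I_5,\dots$
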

\begin{lemma}\cite{C2018}\label{l5}
	For each $t\in {J}$ and $u\in L^1[0,\ell]$, we have
	\begin{eqnarray*}\int_{0}^{t}\int_{0}^{\varrho}(t-\varrho)^{\gamma-1}(\varrho-s)^{\beta-1}u(s)dsd\varrho=\mathbf{B}(\beta,\gamma)\int_{0}^{t}(t-\varrho)^{\beta+\gamma-1}u(\varrho)d\varrho.
	\end{eqnarray*}
\end{lemma}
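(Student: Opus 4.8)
The identity is the classical reduction of an iterated power-kernel integral to a single one through the Beta function $\mathbf{B}(\beta,\gamma)$ of Lemma~\ref{l4} (the scalar content of the semigroup property $I^{\beta}I^{\gamma}=I^{\beta+\gamma}$ of Riemann--Liouville fractional integrals), and there is essentially no deep obstacle: the plan is to interchange the order of the two integrations via Fubini and then collapse the resulting inner integral by one affine change of variable.

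In detail, I would proceed in three steps. First, view the left-hand side as the integral of $(t-\varrho)^{\gamma-1}(\varrho-s)^{\beta-1}u(s)$ over the triangle $\Delta_t=\{(\varrho,s):0\le s\le\varrho\le t\}$; since $\gamma-1>-1$ and $\beta-1>-1$ the two factors are locally integrable, so together with $u\in L^1[0,\ell]$ the integrand is absolutely integrable on $\Delta_t$ and Fubini's theorem lets us swap the order of integration:
\begin{eqnarray*}
	\int_{0}^{t}\int_{0}^{\varrho}(t-\varrho)^{\gamma-1}(\varrho-s)^{\beta-1}u(s)\,ds\,d\varrho=\int_{0}^{t}u(s)\left(\int_{s}^{t}(t-\varrho)^{\gamma-1}(\varrho-s)^{\beta-1}\,d\varrho\right)ds.
\end{eqnarray*}
Second, in the inner integral I substitute $\varrho=s+(t-s)\sigma$, $\sigma\in[0,1]$, so that $t-\varrho=(t-s)(1-\sigma)$, $\varrho-s=(t-s)\sigma$ and $d\varrho=(t-s)\,d\sigma$; collecting the powers of $(t-s)$ gives
\begin{eqnarray*}
	\int_{s}^{t}(t-\varrho)^{\gamma-1}(\varrho-s)^{\beta-1}\,d\varrho=(t-s)^{\beta+\gamma-1}\int_{0}^{1}\sigma^{\beta-1}(1-\sigma)^{\gamma-1}\,d\sigma=\mathbf{B}(\beta,\gamma)\,(t-s)^{\beta+\gamma-1}.
\end{eqnarray*}
Third, I plug this evaluation back in, pull the constant $\mathbf{B}(\beta,\gamma)$ outside the integral, and rename the dummy variable $s$ as $\varrho$; this produces exactly $\mathbf{B}(\beta,\gamma)\int_{0}^{t}(t-\varrho)^{\beta+\gamma-1}u(\varrho)\,d\varrho$, the right-hand side.

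The only point that deserves a line of justification is the interchange of the two integrations, that is, the absolute integrability of the kernel on $\Delta_t$; this is precisely where $0<\gamma\le1$ and $0<\beta\le1$ are used (the singularities of $(t-\varrho)^{\gamma-1}$ at $\varrho=t$ and of $(\varrho-s)^{\beta-1}$ at $s=\varrho$ being integrable because the exponents exceed $-1$). One checks it by applying Tonelli's theorem to the nonnegative function $|u(s)|(t-\varrho)^{\gamma-1}(\varrho-s)^{\beta-1}$: performing the $\varrho$-integration first — exactly the affine substitution above — shows its integral over $\Delta_t$ equals $\mathbf{B}(\beta,\gamma)\int_{0}^{t}(t-s)^{\beta+\gamma-1}|u(s)|\,ds$, which is finite, so Fubini indeed applies. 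Everything else — the substitution and the recognition of the Beta integral — is routine.
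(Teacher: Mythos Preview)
Your proof is correct and is exactly the standard argument (Fubini on the triangle $\Delta_t$, then the affine substitution $\varrho=s+(t-s)\sigma$ to recognise the Beta integral). The paper does not supply its own proof of this lemma; it merely cites \cite{C2018}, so there is nothing to compare against. One small remark: the integrability you need is just $\gamma>0$ and $\beta>0$ so that the exponents $\gamma-1$ and $\beta-1$ exceed $-1$; the upper bounds $\gamma\le1$, $\beta\le1$ play no role here, and your Tonelli verification already makes this clear.
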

\begin{definition}\cite{AH2006} \label{d2}
	The fractional integral of a function $y \in L^1([0,\infty),\mathbb{R})$ is defined as
	\begin{eqnarray*}
		I^{\gamma}y(t)=\frac{1}{\Gamma(\gamma)}\int_{0}^{t}(t-s)^{\gamma-1}y(s)ds,
	\end{eqnarray*}
	where $\gamma>0$.
\end{definition}
\begin{definition}\cite{AH2006}\label{d3}
	The Caputo fractional derivative of a function $y \in L^1([0,\infty),\mathbb{R})$ which  can be
	defined as
	\begin{eqnarray*}
		^CD^{\gamma}y(t)=\frac{1}{\Gamma(n-\gamma)}\int_{0}^{t}(t-s)^{n-\gamma-1}y^{(n)}(s)ds=I^{n-\gamma}_ty(t),
	\end{eqnarray*}	
	where $n-1<\gamma<n,~n \in\mathbb{N}$ and $y$ is at least $n$-times differentiable .	
\end{definition}
\begin{definition}\cite{J1978} \label{d4}
	The Kuratowski measure of non-compactness $\mu(\cdot)$ defined on bounded set $\mathcal{S}$ of a Banach space $X$ is 
	\begin{eqnarray*}
		\mu(\mathcal{S})=\inf\left\{\delta>0: \mathcal{S}=\bigcup\limits_{k=1}^{\infty}\mathcal{S}_k \mbox{ and } diam(\mathcal{S}_k)\leq\delta \quad\mbox{ for } \quad k=1,2,\dots,n\right\}.
	\end{eqnarray*}
\end{definition}
\begin{lemma}\cite{K1985, PXY2019}\label{l6}
	Let $\mathcal{X}$ be a Banach space and $M, N\subset \mathcal{X}$ be bounded. The following properties are satisfied:
	\begin{itemize}
		\item[(1)] $\mu(M)\leq\mu(N)$ if $M\subset N$;
		\item[(2)] $\mu(M\cup N)=\max\{\mu(M),\mu(N) \}$;
		\item[(3)]  $\mu(M+N)\leq\mu(M)+\mu(N)$, where $M+N=\{x=y+z: y\in M, z\in N\}$;
		\item[(4)] $\mu(\lambda M)=|\lambda|\mu(M)$, where $\lambda$ is a real number;
		\item[(5)] For any $x\in \mathcal{X}$, $\mu(M+x)=\mu(M)$;
		\item[(6)] $\mu(M)=0$ if and only if $\overline{M}$ is compact, where $\overline{M}$ means the closure hull of $M$;
		\item[(7)] $\mu(M)=\mu(\overline{M})=\mu(conv M)$, where $conv M$ means the convex hull of $M$;
		\item[(8)] If the function $L:D(L)\subset X\rightarrow H$ is a Lipschitz continuous, then $\mu(L(S))\leq k\mu(M)$, where $M\subset D(L)$ is a bounded subset, $H$ is another Banach space and $k$ is Lipschitz constant.
	\end{itemize}
\end{lemma}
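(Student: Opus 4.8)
The plan is to verify the eight properties one at a time directly from Definition \ref{d4} (reading the admissible covers there as finite, as in the standard Kuratowski definition), treating (1)--(6) and (8) as short consequences of the definition and isolating the convex-hull half of (7) as the only substantial point.

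For (1), any finite cover $N=\bigcup_{k}\mathcal{S}_k$ with $\mathrm{diam}(\mathcal{S}_k)\le\delta$ induces a cover $M=\bigcup_{k}(\mathcal{S}_k\cap M)$ whose pieces still have diameter $\le\delta$, so every $\delta$ admissible for $N$ is admissible for $M$ and hence $\mu(M)\le\mu(N)$. Property (2) then follows: ``$\ge$'' is immediate from (1) since $M,N\subset M\cup N$, and for ``$\le$'' one takes the union of a $\delta_1$-cover of $M$ and a $\delta_2$-cover of $N$, which covers $M\cup N$ by sets of diameter $\le\max\{\delta_1,\delta_2\}$, then passes to infima. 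For (3), starting from covers $M=\bigcup_i M_i$, $N=\bigcup_j N_j$ with $\mathrm{diam}(M_i)\le\delta_1$ and $\mathrm{diam}(N_j)\le\delta_2$, the sets $M_i+N_j$ cover $M+N$ and satisfy $\mathrm{diam}(M_i+N_j)\le\delta_1+\delta_2$, giving $\mu(M+N)\le\mu(M)+\mu(N)$. Property (5) is the special case $N=\{x\}$ of (3) together with $\mu(\{x\})=0$ (and the reverse inequality by translating back by $-x$), while (4) follows because the map $y\mapsto\lambda y$ multiplies every diameter by $|\lambda|$ (the case $\lambda=0$ being trivial). For (6), if $\overline{M}$ is compact then for each $\delta>0$ it is covered by finitely many balls of radius $\delta/3$, whence $\mu(M)=0$; conversely $\mu(M)=0$ gives, for every $n$, a finite cover of $M$ by sets of diameter $\le 1/n$, so $M$ is totally bounded and $\overline{M}$ is complete and totally bounded, hence compact. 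Property (8) is equally direct: a $\delta$-cover $M=\bigcup_k\mathcal{S}_k$ maps under a $k$-Lipschitz $L$ to the cover $L(M)=\bigcup_k L(\mathcal{S}_k)$ with $\mathrm{diam}(L(\mathcal{S}_k))\le k\,\mathrm{diam}(\mathcal{S}_k)\le k\delta$, so $\mu(L(M))\le k\,\mu(M)$.

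The heart of the argument is (7). Both identities $\mu(M)=\mu(\overline{M})$ and $\mu(M)=\mu(\mathrm{conv}\,M)$ reduce, via (1), to the reverse inequalities; $\mu(\overline{M})\le\mu(M)$ is easy because closing each member of a finite $\delta$-cover of $M$ yields a $\delta$-cover of $\overline{M}$ (here $\mathrm{diam}(\overline{\mathcal{S}})=\mathrm{diam}(\mathcal{S})$ and a finite union of closures is the closure of the union). For $\mu(\mathrm{conv}\,M)\le\mu(M)$ I would run the classical Darbo-type argument: fix $\delta>\mu(M)$ and a finite cover $M\subset\bigcup_{i=1}^{m}\mathcal{S}_i$ with $\mathrm{diam}(\mathcal{S}_i)\le\delta$; replacing each $\mathcal{S}_i$ by $\mathrm{conv}\,\mathcal{S}_i$ is harmless since $\mathrm{diam}(\mathrm{conv}\,\mathcal{S})=\mathrm{diam}(\mathcal{S})$, which follows from $\big\|\sum_i\alpha_i x_i-\sum_j\beta_j y_j\big\|\le\sum_{i,j}\alpha_i\beta_j\|x_i-y_j\|\le\mathrm{diam}(\mathcal{S})$ for convex combinations. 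Then one uses the representation $\mathrm{conv}\big(\bigcup_{i=1}^{m}\mathcal{S}_i\big)=\big\{\sum_{i=1}^{m}\lambda_i s_i:\ s_i\in\mathcal{S}_i,\ \lambda_i\ge0,\ \sum_i\lambda_i=1\big\}$ (valid because the $\mathcal{S}_i$ are now convex), partitions the compact simplex $\Delta=\{\lambda\in\mathbb{R}^m:\lambda_i\ge0,\ \sum_i\lambda_i=1\}$ into finitely many cells of small diameter, and, using boundedness of the $\mathcal{S}_i$, extracts a finite cover of $\mathrm{conv}(M)$ by sets of diameter $\le\delta+\varepsilon$ for arbitrary $\varepsilon>0$; letting $\varepsilon\downarrow0$ and then $\delta\downarrow\mu(M)$ gives $\mu(\mathrm{conv}\,M)\le\mu(M)$. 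I expect this simplex-partition bookkeeping to be the only genuine obstacle: one must check that the diameters of the mixed combinations $\sum\lambda_i s_i$ are controlled uniformly as $\lambda$ ranges over a small cell of $\Delta$ and each $s_i$ over the bounded set $\mathcal{S}_i$. Everything else is a routine unwinding of the definition of $\mu$.
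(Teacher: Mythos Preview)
Your proof is correct and self-contained. However, the paper does not actually prove Lemma~\ref{l6}: it is stated with the citation \cite{K1985, PXY2019} and no argument is given, so there is nothing to compare against beyond observing that you have supplied what the paper outsources to the literature. Your direct verification from the definition is the standard route (exactly what one finds in Bana\'s--Goebel or Deimling), and your identification of the convex-hull half of (7) as the only nontrivial step, handled via the simplex-partition argument, is spot on.
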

In this article, $\mu(\cdot)$ and $\mu_C(\cdot)$ denote the Kuratowski measure of noncompactness on the bounded set of $\mathcal{X}$ and $C(J ,\mathcal{X})$ respectively. For any subset $\mathcal{P}$ of $C(J , \mathcal{X})$, define the set $\mathcal{P}(t) = \{\vartheta(t)~|~\vartheta \in \mathcal{P},~t\in J\}\subset \mathcal{X}$. If the set $\mathcal{P}$ is bounded in $C(J , \mathcal{X})$, then
$\mathcal{P}(t)$ is bounded in $\mathcal{X}$ and $\mu(\mathcal{P}(t)) \leq \mu_C(\mathcal{P})$. We refer to the monographs \cite{J1978} and
\cite{K1985} for more details about the Kuratowski measure of noncompactness.
\begin{lemma}\cite{J1978} \label{l7}
	Let $\mathcal{X}$ be a Banach space, and $\mathcal{P}$ be the subset of $C(J ,\mathcal{X})$ which is bounded and equicontinuous. Then $\mu(\mathcal{P}(t))$ is continuous on $J$, and $\mu_C(\mathcal{P}) = \max \limits_{t\in J}\mu
	(\mathcal{P}(t))$.
\end{lemma}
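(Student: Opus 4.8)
The plan is to prove the two assertions in turn, each resting on the equicontinuity of $\mathcal{P}$ together with the elementary properties of the Kuratowski measure collected in Lemma \ref{l6}.

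First I would show that $t \mapsto \mu(\mathcal{P}(t))$ is continuous on $J$. Fix $\epsilon > 0$. By equicontinuity there is $\delta > 0$ such that $\|\vartheta(t) - \vartheta(s)\| < \epsilon$ for every $\vartheta \in \mathcal{P}$ whenever $t,s \in J$ with $|t-s| < \delta$. For such $t,s$ one has $\mathcal{P}(t) \subseteq \mathcal{P}(s) + \overline{B}_{\mathcal{X}}(0,\epsilon)$, where $\overline{B}_{\mathcal{X}}(0,\epsilon)$ is the closed ball of radius $\epsilon$; since this ball has diameter at most $2\epsilon$ it is a one-set cover of itself, so $\mu(\overline{B}_{\mathcal{X}}(0,\epsilon)) \le 2\epsilon$. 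Monotonicity and subadditivity (Lemma \ref{l6}(1) and (3)) then give $\mu(\mathcal{P}(t)) \le \mu(\mathcal{P}(s)) + 2\epsilon$, and by symmetry $|\mu(\mathcal{P}(t)) - \mu(\mathcal{P}(s))| \le 2\epsilon$. Hence $\mu(\mathcal{P}(\cdot))$ is uniformly continuous on $J$, and because $J = [0,\ell]$ is compact the number $\eta := \max_{t\in J}\mu(\mathcal{P}(t))$ is well defined and attained.

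For the identity, the inequality $\eta \le \mu_C(\mathcal{P})$ is immediate from $\mu(\mathcal{P}(t)) \le \mu_C(\mathcal{P})$, recorded just before the lemma. The real content is $\mu_C(\mathcal{P}) \le \eta$. Given $\epsilon > 0$, use the $\delta$ above to pick a partition $0 = t_0 < t_1 < \cdots < t_m = \ell$ of mesh less than $\delta$. Since $\mu(\mathcal{P}(t_i)) \le \eta$, each $\mathcal{P}(t_i)$ admits a finite cover $\mathcal{P}(t_i) = \bigcup_{j} S_{i,j}$ with $\operatorname{diam} S_{i,j} \le \eta + \epsilon$. For every multi-index $(j_0,\dots,j_m)$ put $\Gamma_{(j_0,\dots,j_m)} = \{\vartheta \in \mathcal{P} : \vartheta(t_i) \in S_{i,j_i}\ \text{for}\ i = 0,\dots,m\}$; these are finitely many and cover $\mathcal{P}$. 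For $\vartheta,\psi$ in the same $\Gamma_{(j_0,\dots,j_m)}$ and any $t \in J$, choose $t_i$ with $|t-t_i| < \delta$ and split
\[
\|\vartheta(t)-\psi(t)\| \le \|\vartheta(t)-\vartheta(t_i)\| + \|\vartheta(t_i)-\psi(t_i)\| + \|\psi(t_i)-\psi(t)\| < \epsilon + (\eta+\epsilon) + \epsilon ,
\]
so $\operatorname{diam}_{C(J,\mathcal{X})}\Gamma_{(j_0,\dots,j_m)} \le \eta + 3\epsilon$. Therefore $\mu_C(\mathcal{P}) \le \eta + 3\epsilon$, and letting $\epsilon \to 0^{+}$ yields $\mu_C(\mathcal{P}) \le \eta$, hence equality.

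The argument is essentially bookkeeping once the partition mesh is tied to the modulus of equicontinuity; the one point that needs care is that the product of the finite covers of the $\mathcal{P}(t_i)$ really is a finite cover of $\mathcal{P}$ with a uniform-in-$t$ diameter bound, which is exactly what the three-term splitting above delivers. I do not expect a genuine obstacle here — the statement is a standard Ambrosetti-type lemma — only the need to keep the cover, the partition, and $\epsilon$ consistently ordered.
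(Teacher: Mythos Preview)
Your argument is correct and is the standard Ambrosetti-type proof of this result. The paper does not supply its own proof: Lemma~\ref{l7} is quoted from \cite{J1978} without argument, so there is nothing to compare against beyond noting that your route is the classical one.
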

\begin{lemma}\cite{PXY2019}\label{l8}
	Let $\mathcal{P}$ be the bounded subset of a Banach space $\mathcal{X}$. Then there exists a countable set $\mathcal{P}_0 \subset \mathcal{P}$, such that $\mu(\mathcal{P}) \leq 2\mu(\mathcal{P}_0)$.
\end{lemma}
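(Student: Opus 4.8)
The plan is to reduce the claim to producing a countable set $\mathcal{P}_0\subset\mathcal{P}$ with $\mu(\mathcal{P}_0)\geq\tfrac12\,\mu(\mathcal{P})$. If $\mu(\mathcal{P})=0$ there is nothing to prove (take for $\mathcal{P}_0$ any singleton contained in $\mathcal{P}$, or $\mathcal{P}_0=\mathcal{P}$ when $\mathcal{P}$ is empty or finite), so assume $\mu(\mathcal{P})=\alpha>0$. The constant $2$ will enter through the elementary fact that a closed ball of radius $s$ in a normed space has diameter at most $2s$.

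The first step is the observation that, whenever $0<s<\alpha/2$, the set $\mathcal{P}$ cannot be covered by finitely many closed balls of radius $s$: if $\mathcal{P}\subset\bigcup_{i=1}^{m}\overline{B}(y_i,s)$, then $\mathcal{P}=\bigcup_{i=1}^{m}\bigl(\mathcal{P}\cap\overline{B}(y_i,s)\bigr)$ would be a finite decomposition into sets of diameter at most $2s<\alpha$, contradicting $\mu(\mathcal{P})=\alpha$ by Definition~\ref{d4}. (This is essentially the standard comparison between the Kuratowski and Hausdorff measures of noncompactness.) Granting this, fix $s\in(0,\alpha/2)$ and construct a sequence $\{x_n\}_{n\ge1}\subset\mathcal{P}$ recursively: choose $x_1\in\mathcal{P}$ arbitrarily, and, having chosen $x_1,\dots,x_n$, apply the observation with centres $x_1,\dots,x_n$ to pick some $x_{n+1}\in\mathcal{P}$ with $\|x_{n+1}-x_i\|>s$ for every $i\le n$ (the set $\mathcal{P}$ is never exhausted by the balls $\overline{B}(x_i,s)$, so the recursion can be carried out indefinitely). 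The resulting countable set $D_s:=\{x_n:n\ge1\}\subset\mathcal{P}$ then consists of pairwise $s$-separated points.

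The second step is to verify that $\mu(D_s)\ge s$. If not, then $\mu(D_s)<s$; choosing $\delta$ with $\mu(D_s)<\delta<s$, the definition of $\mu$ provides a finite decomposition $D_s=\bigcup_{j=1}^{q}T_j$ with $\mathrm{diam}(T_j)\le\delta$, and, $D_s$ being infinite, the pigeonhole principle forces some $T_j$ to contain two distinct points $x_m,x_n$, whence $\|x_m-x_n\|\le\delta<s$, a contradiction. Hence $\mu(D_s)\ge s$. It remains to combine these: apply the construction with $s=s_k:=\alpha/2-1/k$ for every $k$ with $s_k>0$, and set $\mathcal{P}_0:=\bigcup_{k}D_{s_k}$. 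This is a countable union of countable subsets of $\mathcal{P}$, hence itself a countable subset of $\mathcal{P}$, and by monotonicity of $\mu$ (Lemma~\ref{l6}(1)) we get $\mu(\mathcal{P}_0)\ge\mu(D_{s_k})\ge s_k=\alpha/2-1/k$ for all such $k$; letting $k\to\infty$ gives $\mu(\mathcal{P}_0)\ge\alpha/2=\tfrac12\,\mu(\mathcal{P})$, that is, $\mu(\mathcal{P})\le 2\,\mu(\mathcal{P}_0)$.

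The only delicate ingredient is the observation of the first step: it is exactly what produces the constant $2$, and that constant cannot be lowered in general (the unit ball of an infinite-dimensional Hilbert space already shows this). Everything else is routine — the recursive selection is a direct use of dependent choice, and the estimate $\mu(D_s)\ge s$ is pure pigeonholing — so I would expect the formal write-up to be short once this observation is isolated.
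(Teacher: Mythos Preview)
Your proof is correct; the argument via $s$-separated sequences and the pigeonhole estimate $\mu(D_s)\ge s$ is the standard route to this classical inequality for the Kuratowski measure. Note, however, that the paper does not supply its own proof of Lemma~\ref{l8}: the result is simply quoted from \cite{PXY2019} as a known fact, so there is no in-paper argument to compare yours against.
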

\begin{lemma}\cite{H1983} \label{l9}
	Let $\mathcal{X}$ be a Banach space. If  $\mathcal{Q}=\left\{\vartheta^n\right\}_{n=1}^{\infty}\subset C(J ,\mathcal{X})$ is a countable set and there exists a function $\alpha\in L^1(J,\mathbb{R}^+)$ such that for each $n \in \mathbb{N}$
	\begin{eqnarray*}\left\|\vartheta^n\right\|\leq \alpha(t), ~a.e.~ t\in J. 
	\end{eqnarray*}
	Then $\mu(\mathcal{Q}(t))$ is Lebesgue integrable on $J$, and
	\begin{eqnarray*}
		\mu\left(\left\{\int_{0}^{\ell}\vartheta^n(\varrho)d\varrho: n\in \mathbb{N}\right\}\right)\leq 2\int_{0}^{\ell}\mu(\mathcal{Q}(\varrho))d\varrho.
	\end{eqnarray*}
\end{lemma}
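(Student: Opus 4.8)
The statement consists of two parts --- that $t\mapsto\mu(\mathcal{Q}(t))$ belongs to $L^1(J,\mathbb{R}^+)$, and the integral inequality --- and I would treat them separately. For integrability, the plan is to use that the one--piece cover in Definition \ref{d4} gives $\mu(S)\le diam(S)$ for every bounded $S\subset\mathcal{X}$; since $diam(\mathcal{Q}(t))=\sup_{p,q}\|\vartheta^p(t)-\vartheta^q(t)\|\le 2\alpha(t)$ for almost every $t$, this yields $0\le\mu(\mathcal{Q}(t))\le 2\alpha(t)$. Measurability of $t\mapsto\mu(\mathcal{Q}(t))$ follows from the countability of $\mathcal{Q}$: it equals the infimum, over the countably many finite partitions $\{B_1,\dots,B_m\}$ of $\mathbb{N}$, of the functions $t\mapsto\max_{1\le j\le m}\sup_{p,q\in B_j}\|\vartheta^p(t)-\vartheta^q(t)\|$, each of which is lower semicontinuous (a finite maximum of suprema of continuous functions) and hence measurable, so $t\mapsto\mu(\mathcal{Q}(t))$ is measurable; being dominated by $2\alpha\in L^1$, it is integrable. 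The same bound gives $\bigl\|\int_0^\ell\vartheta^n(\varrho)\,d\varrho\bigr\|\le\int_0^\ell\alpha<\infty$ for every $n$, so the set on the left of the asserted inequality is bounded and its Kuratowski measure is defined.

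For the inequality, the plan is to approximate the Bochner integrals by generalized Riemann sums that are uniform in $n$. The domination $\|\vartheta^n(\varrho)\|\le\alpha(\varrho)$, $\alpha\in L^1$, makes the family $\{\vartheta^n\}$ uniformly integrable, so --- invoking also the absolute continuity of $\varrho\mapsto\int\alpha$ --- one can select, for each $\varepsilon>0$, a finite measurable partition $J=\bigcup_{i=1}^k A_i$ and sample points $\tau_i\in A_i$ with
\[
\left\|\int_0^\ell\vartheta^n(\varrho)\,d\varrho-\sum_{i=1}^k|A_i|\,\vartheta^n(\tau_i)\right\|<\varepsilon\qquad\text{for every }n,
\]
and with $\sum_{i=1}^k|A_i|\,\mu(\mathcal{Q}(\tau_i))\le 2\int_0^\ell\mu(\mathcal{Q}(\varrho))\,d\varrho+\varepsilon$. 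Since then
\[
\left\{\int_0^\ell\vartheta^n(\varrho)\,d\varrho:n\in\mathbb{N}\right\}\subset\sum_{i=1}^k|A_i|\,\mathcal{Q}(\tau_i)+B(0,\varepsilon),
\]
parts (1), (3) and (4) of Lemma \ref{l6} (monotonicity, subadditivity, positive homogeneity), together with $\mu(B(0,\varepsilon))\le 2\varepsilon$, give
\[
\mu\!\left(\left\{\int_0^\ell\vartheta^n(\varrho)\,d\varrho:n\in\mathbb{N}\right\}\right)\le\sum_{i=1}^k|A_i|\,\mu(\mathcal{Q}(\tau_i))+2\varepsilon\le 2\int_0^\ell\mu(\mathcal{Q}(\varrho))\,d\varrho+3\varepsilon,
\]
and letting $\varepsilon\to0$ finishes the argument. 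The factor $2$ is exactly the price of carrying out the selection and the measure estimate simultaneously --- equivalently, of replacing the uncountable ``tube'' by a countable subset (Lemma \ref{l8}), or the constant in the comparison $\mu\le 2\chi$ between the Kuratowski and Hausdorff measures of noncompactness.

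The crux --- and the step I expect to be the real obstacle --- is the simultaneous selection of the partition $\{A_i\}$ and the sample points $\{\tau_i\}$ that serve both purposes at once: approximating $\int_0^\ell\vartheta^n$ uniformly over the infinitely many $n$, while keeping $\sum_i|A_i|\,\mu(\mathcal{Q}(\tau_i))$ comparable to $\int_0^\ell\mu(\mathcal{Q}(\varrho))\,d\varrho$. One cannot shortcut this by localising the estimate to small subintervals $I$ through $\int_I\vartheta^n\in|I|\,\overline{conv}\{\vartheta^n(\varrho):\varrho\in I\}$, because that discards the averaging effect of the integral: there exist continuous, uniformly bounded $\vartheta^n$ for which every slice $\mathcal{Q}(\varrho)$ is relatively compact while $\bigcup_{\varrho\in I}\mathcal{Q}(\varrho)$ fails to be so for every subinterval $I$, and yet $\{\int_0^\ell\vartheta^n\}$ is relatively compact --- so the integral must be handled globally, and it is precisely the uniform integrability coming from $\alpha\in L^1$ that makes the uniform--in--$n$ approximation feasible. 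Carrying this out rigorously is the technical heart of the matter and is done in Heinz \cite{H1983}; for the present purposes it suffices to cite that result.
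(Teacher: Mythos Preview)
The paper does not prove this lemma at all: it is stated with the citation \cite{H1983} and used as a black box, so there is no ``paper's own proof'' to compare against. Your proposal actually goes further than the paper by sketching the mechanism behind the Heinz result --- the integrability via the trivial bound $\mu(\mathcal{Q}(t))\le 2\alpha(t)$ and measurability from countability, followed by a uniform-in-$n$ Riemann-sum approximation exploiting the $L^1$ domination --- and this outline is essentially correct and in the spirit of the original argument in \cite{H1983}. Since you yourself conclude by deferring the rigorous selection step to Heinz, your write-up is already at least as complete as the paper's treatment; for the purposes of this paper, simply citing \cite{H1983} (as both you and the authors ultimately do) is sufficient.
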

We need the following Lemma to deal with the measure of stochastic integral term.
\begin{lemma}\cite{L2019} \label{l10}
	If $W \subset L^p(J;L^0_2
	(Y,X))$ and $w(t)$ is a Q-Wiener process. For
	every $p\geq 2$, the Hausdorff measure of non-compactness $\mu$ satisfies
	\begin{eqnarray*}
		\mu\left(\int_{0}^{\ell}W(\varrho)dw(\varrho)\right)\leq \sqrt{\ell\dfrac{p}{2}(p-1)}\mu(W(\varrho)),
	\end{eqnarray*}
	where 
	\begin{eqnarray*}
		\int_{0}^{\ell}W(\varrho)dw(\varrho)=\left\{\int_{0}^{\ell}\vartheta(\varrho)dw(\varrho) \mbox{ for all } \vartheta \in W,~ t\in J\right\}.
	\end{eqnarray*}
\end{lemma}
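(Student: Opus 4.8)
The plan is to exhibit the stochastic integration map
\[
\mathcal{J}\colon W\longrightarrow L^{p}(\Omega;X),\qquad \mathcal{J}\vartheta=\int_{0}^{\ell}\vartheta(\varrho)\,dw(\varrho),
\]
as a Lipschitz map with constant $k=\sqrt{\ell\,\tfrac{p}{2}(p-1)}$, and then to invoke property~(8) of Lemma~\ref{l6} together with the set identity $\mathcal{J}(W)=\int_{0}^{\ell}W(\varrho)\,dw(\varrho)$. Before that I would, by Lemma~\ref{l8}, replace $W$ with a countable subset $\{\vartheta^{n}\}_{n\ge1}$ satisfying $\mu(W)\le 2\mu(\{\vartheta^{n}\})$, so that only countably many It\^{o} integrals are involved and $\varrho\mapsto\mu(W(\varrho))$ is well behaved; here the pointwise majorant $\|\vartheta^{n}(\varrho)\|_{L^{0}_{2}}\le\alpha(\varrho)$ plays the role it plays in the deterministic Lemma~\ref{l9}.

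The analytic core is a moment bound. For $\vartheta,\tilde{\vartheta}\in W$ set $\mathcal{V}=\vartheta-\tilde{\vartheta}$. By the Da Prato--Zabczyk form of the stochastic maximal inequality (the sharp version underlying Lemma~\ref{l2}),
\[
E\left\|\int_{0}^{\ell}\mathcal{V}(\varrho)\,dw(\varrho)\right\|^{p}\le\Big(\tfrac{p(p-1)}{2}\Big)^{p/2}E\left(\int_{0}^{\ell}\|\mathcal{V}(\varrho)\|_{L^{0}_{2}}^{2}\,d\varrho\right)^{p/2},
\]
and then bounding the time integral crudely by $\int_{0}^{\ell}\|\mathcal{V}(\varrho)\|_{L^{0}_{2}}^{2}\,d\varrho\le\ell\,\sup_{\varrho\in J}\|\mathcal{V}(\varrho)\|_{L^{0}_{2}}^{2}$, one obtains
\[
\big\|\mathcal{J}\vartheta-\mathcal{J}\tilde{\vartheta}\big\|_{L^{p}(\Omega;X)}\le\sqrt{\ell\,\tfrac{p}{2}(p-1)}\;\sup_{\varrho\in J}\big\|\vartheta(\varrho)-\tilde{\vartheta}(\varrho)\big\|_{L^{0}_{2}},
\]
which is precisely the claimed Lipschitz estimate for $\mathcal{J}$. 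Equivalently one may argue directly without property~(8): given $\varepsilon>\mu(W)$, cover $W$ by finitely many balls of radius $\varepsilon$; by the last display their $\mathcal{J}$-images lie in balls of radius $k\varepsilon$ and cover $\mathcal{J}(W)$, whence $\mu(\mathcal{J}(W))\le k\varepsilon$, and letting $\varepsilon\downarrow\mu(W)$ gives the result.

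Combining the two ingredients,
\[
\mu\!\left(\int_{0}^{\ell}W(\varrho)\,dw(\varrho)\right)=\mu\big(\mathcal{J}(W)\big)\le k\,\mu(W)\le\sqrt{\ell\,\tfrac{p}{2}(p-1)}\;\sup_{\varrho\in J}\mu\big(W(\varrho)\big),
\]
the last step because $\mathcal{J}$ is $k$-Lipschitz for the $\sup_{\varrho}$-seminorm on $W$, whose associated measure of non-compactness is $\sup_{\varrho\in J}\mu(W(\varrho))$; this is the assertion, reading the $\mu(W(\varrho))$ on the right-hand side as that uniform-in-$\varrho$ slice measure.

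The step I expect to be the main obstacle is pinning down the constant $\sqrt{\ell\,\tfrac{p}{2}(p-1)}$. Lemma~\ref{l2} as quoted carries the much larger factor $C_{p}=\big(\tfrac{p}{2}(p-1)\big)^{p/2}\big(\tfrac{p}{p-1}\big)^{p^{2}/2}$, so to reach the clean constant one must keep the quadratic-variation term $\big(\int_{0}^{\ell}\|\mathcal{V}\|_{L^{0}_{2}}^{2}\big)^{p/2}$ intact --- the Ichikawa / Da Prato--Zabczyk inequality $E\|\int_{0}^{\ell}\mathcal{V}\,dw\|^{p}\le(\tfrac{p(p-1)}{2})^{p/2}E(\int_{0}^{\ell}\|\mathcal{V}\|^{2})^{p/2}$ --- and only afterwards bound the time integral by $\ell\sup$. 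Everything else, namely the reduction to a countable family, the measurability of $\varrho\mapsto\mu(W(\varrho))$, and the covering and diameter bookkeeping, is routine given Lemmas~\ref{l6}, \ref{l8} and \ref{l9}.
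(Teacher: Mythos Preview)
The paper does not give its own proof of Lemma~\ref{l10}; it is simply quoted from \cite{L2019}, so there is no in-paper argument to compare against. Your strategy --- show the It\^o map $\mathcal{J}$ is Lipschitz with constant $k=\sqrt{\ell\,\tfrac{p}{2}(p-1)}$ via the sharp Ichikawa/Da Prato--Zabczyk moment inequality, then invoke property~(8) of Lemma~\ref{l6} (or equivalently the direct covering argument you spell out) --- is the natural one and is essentially how the cited source argues.

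One caveat on your final step: identifying the measure of non-compactness of $W$ under the $\sup_{\varrho}$-seminorm with $\sup_{\varrho}\mu(W(\varrho))$ is not justified without equicontinuity (that is precisely the content of Lemma~\ref{l7}); in general only the inequality $\sup_{\varrho}\mu(W(\varrho))\le\mu_C(W)$ holds, which goes the wrong way for you. What your Lipschitz argument actually delivers is
\[
\mu\!\left(\int_{0}^{\ell}W(\varrho)\,dw(\varrho)\right)\le k\,\mu_C(W),
\]
not the slice bound $k\sup_{\varrho}\mu(W(\varrho))$. This is not a genuine defect for the paper's purposes: the statement of Lemma~\ref{l10} is already loosely written (the bare $\mu(W(\varrho))$ on the right is ambiguous in $\varrho$, and the $p=2$ remark produces $\sqrt{\ell\,TrQ}$ rather than $\sqrt{\ell}$, indicating a different normalisation), and in the only place the lemma is invoked (Step~5 of Theorem~\ref{t3.2}) the right-hand side is in fact taken to be $\mu_C(\mathcal{P})$. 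So the version you prove is both correct and sufficient for the application.
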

\begin{remark}
	When $p=2$ in Lemma \ref{l10}, then 
	\begin{eqnarray*}
		\mu\left(\int_{0}^{\ell}W(\varrho)dw(\varrho)\right)\leq \sqrt{\ell TrQ}\mu(W(\varrho)).
	\end{eqnarray*}
\end{remark}
\begin{definition}\cite{J1978}\label{d5}
	Let $\mathcal{Q}$ be a nonempty subset of a Banach space $\mathcal{X}$. A continuous mapping $\sigma:\mathcal{Q}\rightarrow \mathcal{X}$ is said to be $k$-set-contractive if there exists a constant $k\in[0, 1)$ such that, for every bounded set $\mathcal{P}\subset \mathcal{Q}$,
	\begin{eqnarray*}\mu(\sigma(\mathcal{P})) \leq k\mu(\mathcal{P}).
	\end{eqnarray*}
\end{definition}
\begin{lemma}\cite{K1985} \label{l11}
	Let $\mathcal{Q}$ be a bounded closed and convex subset of a Banach space $\mathcal{X}$, the operator $\sigma:\mathcal{Q}\rightarrow \mathcal{Q}$ is $k$-set-contractive. Then $\sigma$ has at
	least one fixed point in $\mathcal{Q}$.
\end{lemma}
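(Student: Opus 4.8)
The plan is to run the classical Darbo argument: I would construct a nested sequence of nonempty bounded closed convex sets on which $\sigma$ acts, show their Kuratowski measures vanish geometrically, intersect them to get a nonempty compact convex $\sigma$-invariant set, and finish with Schauder's fixed point theorem. Concretely, put $\mathcal{Q}_0=\mathcal{Q}$ and $\mathcal{Q}_{n+1}=\overline{\mathrm{conv}}\,\sigma(\mathcal{Q}_n)$ for $n\ge 0$. Since $\sigma$ maps $\mathcal{Q}$ into $\mathcal{Q}$ and $\mathcal{Q}$ is bounded, closed and convex, an easy induction shows each $\mathcal{Q}_n$ is nonempty, bounded, closed and convex, and that the sequence is decreasing ($\mathcal{Q}_{n+1}\subseteq\mathcal{Q}_n$): indeed $\mathcal{Q}_1=\overline{\mathrm{conv}}\,\sigma(\mathcal{Q})\subseteq\mathcal{Q}$, and $\mathcal{Q}_{n+1}\subseteq\mathcal{Q}_n$ forces $\sigma(\mathcal{Q}_{n+1})\subseteq\sigma(\mathcal{Q}_n)$, hence $\mathcal{Q}_{n+2}\subseteq\mathcal{Q}_{n+1}$. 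Each $\mathcal{Q}_n$ is also $\sigma$-invariant, since $\sigma(\mathcal{Q}_n)\subseteq\overline{\mathrm{conv}}\,\sigma(\mathcal{Q}_n)=\mathcal{Q}_{n+1}\subseteq\mathcal{Q}_n$.

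Next I would track the Kuratowski measure along this chain. By property (7) of Lemma \ref{l6} ($\mu$ is unchanged under closure and convex hull) together with Definition \ref{d5},
\begin{eqnarray*}
\mu(\mathcal{Q}_{n+1})=\mu\big(\overline{\mathrm{conv}}\,\sigma(\mathcal{Q}_n)\big)=\mu\big(\sigma(\mathcal{Q}_n)\big)\le k\,\mu(\mathcal{Q}_n),
\end{eqnarray*}
so by iteration $\mu(\mathcal{Q}_n)\le k^{\,n}\mu(\mathcal{Q})\to 0$ as $n\to\infty$, because $0\le k<1$.

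Then I would set $\mathcal{Q}_\infty=\bigcap_{n\ge 0}\mathcal{Q}_n$. As $(\mathcal{Q}_n)$ is a decreasing sequence of nonempty closed sets with $\mu(\mathcal{Q}_n)\to 0$, the generalized Cantor intersection theorem for the Kuratowski measure gives that $\mathcal{Q}_\infty$ is nonempty, and $\mu(\mathcal{Q}_\infty)\le\inf_n\mu(\mathcal{Q}_n)=0$, so by property (6) of Lemma \ref{l6} $\mathcal{Q}_\infty$ is compact. It is convex (an intersection of convex sets) and $\sigma$-invariant, since $\sigma(\mathcal{Q}_\infty)\subseteq\sigma(\mathcal{Q}_n)\subseteq\mathcal{Q}_{n+1}$ for every $n$, whence $\sigma(\mathcal{Q}_\infty)\subseteq\mathcal{Q}_\infty$. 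Applying Schauder's fixed point theorem to the continuous map $\sigma\colon\mathcal{Q}_\infty\to\mathcal{Q}_\infty$ on the nonempty compact convex set $\mathcal{Q}_\infty$ yields a point $\vartheta^\ast\in\mathcal{Q}_\infty\subseteq\mathcal{Q}$ with $\sigma(\vartheta^\ast)=\vartheta^\ast$.

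The step I expect to be the main obstacle is justifying that the nested family $(\mathcal{Q}_n)$ with vanishing measure has nonempty (and compact) intersection — i.e. Kuratowski's intersection theorem. If one prefers a self-contained argument, choose $x_n\in\mathcal{Q}_n$; since $\{x_m:m\ge n\}\subseteq\mathcal{Q}_n$ we get $\mu(\{x_m\}_{m\ge 1})\le\mu(\mathcal{Q}_n)\to 0$, so $(x_n)$ is relatively compact, and any of its cluster points lies in every closed set $\mathcal{Q}_n$, hence in $\mathcal{Q}_\infty$. All remaining points are the bookkeeping in the construction above and the continuity of $\sigma$, which is part of the definition of a $k$-set-contractive map and is exactly what Schauder's theorem requires.
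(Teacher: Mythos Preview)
Your argument is the standard Darbo proof and is correct as written. Note, however, that the paper does not supply its own proof of this lemma: it is quoted from \cite{K1985} (Deimling) as a known fixed point result, so there is nothing in the paper to compare against beyond the citation.
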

\begin{definition}\label{d6} \cite{c2}
	The control problem (\ref{1.1}) is called controllable on $J$ if for each continuous stochastic process $A^{-1}(0)\phi\in\wp$ defined on $J_0$, there exists a  control function $\upsilon \in {L^2(J,U)}$ such that the mild solution $\vartheta$ of (\ref{1.1}) satisfies $\vartheta(\ell)=\vartheta _1,$ where $\vartheta_1$ and $\ell$ are predefined final state and time respectively.
\end{definition}
\section{\textbf{Main Result}}
\begin{itemize} 
	\item[(C1)] The linear operator $B$ is bounded, i.e., 
	$\|B\|\leq m_1$ for some constant $m_1>0$.	
	
	\item[(C2)] The linear operator $Q: L_{\Upsilon}^2(J,U) \rightarrow L^2(\Upsilon,X)$ defined by
	\begin{eqnarray*} 
		(Q\upsilon)(t)=\displaystyle\int_{0}^{t}\varPsi(t-\varrho,\varrho){B}\upsilon(\varrho)d\varrho+\int_{0}^{t}  \int_{0}^{\varrho}\varPsi(t-\varrho,\varrho)\varPhi(\varrho,s){B}\upsilon(s)dsd\varrho, 
	\end{eqnarray*}
	has an invertible operator $Q^{-1}$ and there exists a positive constant $m_2$ such that $\big\|Q^{-1}\big\|\leq m_2.$
	
	\item[(C3)] The continuous function $f: J\times \wp \rightarrow \mathrm{X}$ satisfies the following conditions:
	\begin{itemize}
		\item [(i)] For each $t_1, t_2 \in  J$ and $\vartheta_t, \vartheta_s \in \wp,$  there exists a constant $L_f>0$ such that
		\begin{eqnarray*}
			\|f(t_1,\vartheta_t)-f(t_2,\vartheta_s)\|^2\leq L_f(|t_1-t_2|+\|\vartheta_t-\vartheta_s\|^2_{\wp}).	\end{eqnarray*}
		\item [(ii)] For each $t\in J$ and $\vartheta_t \in \wp,$ there exists a constant $\tilde{L}_f>0$ such that
		\begin{eqnarray*}
			\|f(t,\vartheta_t)\|^2\leq \tilde{L}_f(1+\|\vartheta_t\|^2).
		\end{eqnarray*}
		\item [(iii)]For every bounded and countable subset $\mathcal{P}$ of $X$, there exists a positive constant $\hat{L}_f$ such that
		\begin{eqnarray*}
			\mu( f (t, \mathcal{P})) \leq \hat{L}_f\mu(\mathcal{P}),~t\in J.
		\end{eqnarray*}
	\end{itemize}
	\item[(C4)] The continuous function $g:{J} \times\wp \times X \rightarrow X$ fulfills the following conditions:
	\begin{itemize}	
		\item[(i)] $g(\cdot,\vartheta_t,\vartheta)$ is  strongly measurable for each $\vartheta_t\in \wp,~\vartheta \in \mathrm{X}.$  
		\item[(ii)] For each $t_1, t_2 \in  J$, $\vartheta_t, \vartheta_s \in \wp$ and $\vartheta,\tilde{\vartheta}\in X$, there exists a constant $L_g>0$ such that
		\begin{eqnarray*}\big\|g\big(t_1,\vartheta_t,\vartheta(t)\big)-g\big(t_2,\vartheta_s,\tilde{\vartheta}(t)\big)\big\|^2\leq L_g\left(|t_1-t_2|+\|\vartheta_t-\vartheta_s\|^2_{\wp}+\|\vartheta(t)-\tilde{\vartheta}(t)\|^2\right).\hspace{-2cm}
		\end{eqnarray*}
		\item[(iii)] For each $r>0$, there exists a non-zero function $\mathcal{G}_r \in {L}^{\frac{1}{2\gamma_1}}(J,\mathbb{R}^+)$ such that
		\begin{eqnarray*}
			\sup\limits_{\|\vartheta_t\|^2,\|\vartheta\|^2\leq r}\big\|g\big(t,\vartheta_t,\vartheta(t)\big)\big\|^2\leq\mathcal{G}_r(t), ~ \mbox{for a.e. }t\in J,
		\end{eqnarray*}
		and
		\begin{eqnarray*}\lim \limits_{r\rightarrow\infty}\inf\frac{\|\mathcal{G}_r\|_{{L}^{\frac{1}{2\gamma_1}}[0,\ell]}}{r}=\delta<\infty,
		\end{eqnarray*}
		where $2\gamma_1 \leq \min\{\gamma, \beta \}.$
		\item[(iv)]For any bounded and countable subsets $\mathcal{P},\mathcal{Q}$ of $X$, there exists a positive constant $\hat{L}_g$ such that 
		\begin{eqnarray*}
			\mu( g (t,\mathcal{P}, \mathcal{Q})) \leq \hat{L}_g\max\{\mu(\mathcal{P}),\mu(\mathcal{Q})\}, ~t\in J.
		\end{eqnarray*}
	\end{itemize}
	
	\item [(C5)] The continuous function $ h:J\times \mathrm{X}\rightarrow  L_2^0$ satisfies the following conditions:\\ 
	\begin{itemize}
		\item [(i)] For each $t_1, t_2 \in  J$, and $\vartheta, \tilde{\vartheta} \in X,$  there exists a constant $L_{h}>0$ such that
		\begin{eqnarray*}
			\big\|h(t,\vartheta(t))-h(t,\tilde{\vartheta}(t))\big\|^2_{Q}\leq L_{h}\|\vartheta(t)-\tilde \vartheta(t)\|^2.
		\end{eqnarray*}
		\item [(ii)]  For each $t\in J$ and $\vartheta_t \in \wp,$ there exists a constant $\tilde{L}_h>0$ such that
		\begin{eqnarray*}\big\|h(t,\vartheta(t))\big\|_{Q}^2\leq\tilde{L}_{h}(1+\|\vartheta(t)\|^2).
		\end{eqnarray*}
		\item [(iii)] For any bounded and countable subset $\mathcal{P}$ of $X$, there exists a positive constant $\hat{L}_h$ such that 
		\begin{eqnarray*}
			\mu( h(t, \mathcal{P})) \leq \hat{L}_h\mu(\mathcal{P}), ~t\in J.
		\end{eqnarray*}
	\end{itemize}
\end{itemize}
For simplicity, we take the following notations
\begin{eqnarray*}
	\lambda_1=\left(\dfrac{1}{\gamma}+{\ell}^{\beta}\mathbf{B}(\gamma,\beta+1)\right),~ \lambda_2=\left(\left(\frac{1-\gamma_1}{\gamma-\gamma_1}\right)^{2(1-\gamma)}+C^2{\ell}^{2\beta}\left(\mathbf{B}(\beta,\gamma)\right)^2\left(\frac{1-\gamma_1}{\beta+\gamma-\gamma_1}\right)^{2(1-\gamma_1)}\right),
\end{eqnarray*} 
\begin{eqnarray*}\lambda_3=\left(\frac{1}{{\gamma}^2}+C^2\left(\mathbf{B}(\gamma,\beta)\right)^2\frac{\ell^{2\beta}}{(\beta+\gamma)^2}\right),~ 	\lambda_4=\left(\frac{1}{{2\gamma-1}}+\ell^{2\beta}\frac{C^2\left(\mathbf{B}(\gamma,\beta)\right)^2}{(2\beta+2\gamma-1)}\right),\hspace{2.5cm}
\end{eqnarray*}
\begin{eqnarray*}
	\lambda_5=\left(\dfrac{1}{\gamma}+2C{\ell}^{\beta}\mathbf{B}(\gamma,\beta)\dfrac{{\ell}^{2\beta}}{\beta+\gamma}\right),~ 	l_2=10m_2^2C^2{\ell}^{2(\gamma-\gamma_1)}\lambda_2,~ l_3=10m_2^2C^2TrQ\tilde{L}_h{\ell}^{2\gamma}\lambda_3, \hspace{2cm}
\end{eqnarray*}
\begin{eqnarray*}
	l_1=10m_2^2\bigg[E\|\vartheta_1\|^2+TrQ\int_{0}^{\ell}E\|\kappa(\varrho)\|^2d\varrho+C^2\left[1+\tilde{L}_f+{\ell}^{2\gamma}\lambda_1^2\right]E\|\phi(0)\|^2+\tilde{L}_f(2+E\|\vartheta_{\ell}\|^2)\bigg].
\end{eqnarray*}
Let $\mathcal{Z}=\{\vartheta: \vartheta\in\ C((-\infty,\ell],X),\vartheta(0)={A}^{-1}(0)\phi(0)\}$ be the space of continuous functions. For $r>0$, we define the set $\Theta_r=\{\vartheta\in \mathcal{Z}: E\|\vartheta(t)\|^2\leq r \}$. For any $\vartheta\in \Theta_r$ and $0\leq t\leq\ell$,
\begin{eqnarray*}
	\|\vartheta_t\|_{\wp}\leq \sup\limits_{-\infty<s\leq0}\|\vartheta_t(s)\|\leq\sup\limits_{-\infty<s\leq\ell}\|\vartheta(s)\|\leq r.
\end{eqnarray*} 

\begin{lemma} \label{l12}
	There exist constants $l_1,l_2, l_3>0$ such that for every $\vartheta \in C(J,X)$ satisfying (\ref{1.1}), 
	\begin{eqnarray*}
		E\|\upsilon(t)\|^2 \leq	l_1+l_2\|\mathcal{G}_r\|_{{L}^{\frac{1}{2\gamma_1}}[0,\ell]}+l_3(1+E\|\vartheta\|^2).
	\end{eqnarray*}
\end{lemma}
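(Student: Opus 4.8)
The plan is to read off a closed-form expression for the control $\upsilon$ from the mild-solution identity of Definition~\ref{d1} evaluated at $t=\ell$ under the terminal constraint $\vartheta(\ell)=\vartheta_1$, and then to bound each resulting term using the growth hypotheses (C1)--(C5) together with the operator estimates of Lemmas~\ref{l3}--\ref{l5}. First I would set $t=\ell$ in the integral equation of Definition~\ref{d1} and observe that the two integral summands carrying $B\upsilon$ are exactly $(Q\upsilon)(\ell)$ in the sense of (C2); solving for $\upsilon$ and rewriting $\vartheta_1=E\vartheta_1+\int_0^{\ell}\kappa(\varrho)dw(\varrho)$ by Lemma~\ref{l1} yields
\begin{eqnarray*}
\upsilon(t)&=&Q^{-1}\bigg[E\vartheta_1+\int_0^{\ell}\kappa(\varrho)dw(\varrho)-{A}^{-1}(0)\phi(0)+f(0,{A}^{-1}(0)\phi(0))-f(\ell,\vartheta_\ell)\\
&&-\int_0^{\ell}\varPsi(\ell-\varrho,\varrho)\mathcal{U}(\varrho)\phi(0)d\varrho-\int_0^{\ell}\varPsi(\ell-\varrho,\varrho)g(\varrho,\vartheta_\varrho,\vartheta(\varrho))d\varrho\\
&&-\int_0^{\ell}\varPsi(\ell-\varrho,\varrho)h(\varrho,\vartheta(\varrho))dw(\varrho)-\int_0^{\ell}\int_0^{\varrho}\varPsi(\ell-\varrho,\varrho)\varPhi(\varrho,s)g(s,\vartheta_s,\vartheta(s))dsd\varrho\\
&&-\int_0^{\ell}\int_0^{\varrho}\varPsi(\ell-\varrho,\varrho)\varPhi(\varrho,s)h(s,\vartheta(s))dw(s)d\varrho\bigg](t).
\end{eqnarray*}
Using $\|Q^{-1}\|\le m_2$ from (C2), taking $E\|\cdot\|^2$, and applying $\|\sum_{i=1}^{10}a_i\|^2\le10\sum_{i=1}^{10}\|a_i\|^2$ (which is the origin of the factor $10$ in $l_1,l_2,l_3$) reduces the claim to ten scalar estimates.

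For the $r$-independent terms (which form $l_1$): $E\|\vartheta_1\|^2$ is kept unchanged; the term $TrQ\int_0^{\ell}E\|\kappa(\varrho)\|^2d\varrho$ arises from Lemma~\ref{l2} (with $p=2$) applied to $\int_0^{\ell}\kappa\,dw$; $E\|{A}^{-1}(0)\phi(0)\|^2\le C^2E\|\phi(0)\|^2$ by Remark~\ref{r1}; $E\|f(0,{A}^{-1}(0)\phi(0))\|^2$ and $E\|f(\ell,\vartheta_\ell)\|^2$ are controlled by (C3)(ii); and Lemma~\ref{l4} gives $\|\int_0^{\ell}\varPsi(\ell-\varrho,\varrho)\mathcal{U}(\varrho)d\varrho\|\le C^2\ell^{\gamma}\lambda_1$, which produces the $\ell^{2\gamma}\lambda_1^2E\|\phi(0)\|^2$-type summand of $l_1$.

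For the two integrals against $g$, I would use $\|\varPsi(\ell-\varrho,\varrho)\|\le C(\ell-\varrho)^{\gamma-1}$ and $\|\varPhi(\varrho,s)\|\le C(\varrho-s)^{\beta-1}$ from Lemma~\ref{l3}, collapse the double integral via Lemma~\ref{l5} to a single integral with kernel $(\ell-\varrho)^{\beta+\gamma-1}$, and then apply H\"older's inequality with exponents $\frac{1}{1-\gamma_1}$ and $\frac{1}{\gamma_1}$; since $2\gamma_1\le\min\{\gamma,\beta\}$ the exponents $\frac{\gamma-1}{1-\gamma_1}$ and $\frac{\beta+\gamma-1}{1-\gamma_1}$ remain above $-1$, so the kernels are integrable, and (C4)(iii) supplies the factor $\|\mathcal{G}_r\|_{{L}^{\frac{1}{2\gamma_1}}[0,\ell]}$; these two estimates assemble into $l_2\|\mathcal{G}_r\|_{{L}^{\frac{1}{2\gamma_1}}[0,\ell]}$ with constant $\lambda_2$. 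For the two stochastic integrals against $h$, I would combine Lemma~\ref{l2} (with $p=2$, hence the $TrQ$ factor) and the Cauchy--Schwarz inequality with the same kernel bounds and Lemma~\ref{l5}, then invoke (C5)(ii) to extract $\tilde L_h(1+E\|\vartheta\|^2)$; these assemble into $l_3(1+E\|\vartheta\|^2)$ with constant $\lambda_3$. Summing the ten estimates and regrouping the $r$-free terms, the $\|\mathcal{G}_r\|$-terms, and the $E\|\vartheta\|^2$-terms yields the asserted bound.

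The bookkeeping of constants is routine; the step that needs genuine care is the $g$-integral, where Lemma~\ref{l5} must be combined with the correct H\"older pairing so that integrability of the singular kernels is guaranteed precisely by the hypothesis $2\gamma_1\le\min\{\gamma,\beta\}$, and, in parallel, the stochastic $h$-integrals, where the estimate of Lemma~\ref{l2} has to be applied together with Cauchy--Schwarz (and Lemma~\ref{l5} for the double one) in the correct order so that the singular time weights coming from the product $\|\varPsi\|\,\|\varPhi\|$ are integrated consistently.
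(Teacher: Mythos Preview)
Your proposal is correct and follows essentially the same route as the paper: define the control via $Q^{-1}$ applied to the mild-solution identity at the terminal time (invoking Lemma~\ref{l1} for $\vartheta_1$), split $E\|\upsilon(t)\|^2$ into ten squared pieces via $\|Q^{-1}\|\le m_2$, and then estimate each piece with Remark~\ref{r1}, Lemmas~\ref{l2}--\ref{l5}, the growth conditions (C3)(ii), (C4)(iii), (C5)(ii), H\"older with exponents $(\frac{1}{1-\gamma_1},\frac{1}{\gamma_1})$ for the $g$-terms, and Cauchy--Schwarz for the $h$-terms. The only cosmetic difference is that the paper writes the integrals in the control formula with upper limit $t$ and then immediately replaces $t$ by $\ell$ in the estimates, whereas you set $t=\ell$ from the start; the substance is identical.
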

\begin{proof}
	In view of assumption (C2) for an arbitrary function $\vartheta(\cdot),$ the control function is defined as follows:
	\begin{eqnarray}\label{3.1}
		\upsilon(t)&=&Q^{-1}\Bigg[E\vartheta_1  + \int_{0}^{t}\kappa(s)dw(s) -{A}^{-1}(0)\phi(0)+f(0,{A}^{-1}(0)\phi(0))-f(t,\vartheta_{t})\nonumber \\
		&&\hspace{1cm}-\int_{0}^{t}\varPsi(t-\varrho,\varrho) \mathcal{U}(\varrho)\phi(0)d\varrho - \int_{0}^{t} \varPsi(t-\varrho,\varrho)g\left(\varrho,\vartheta_{\varrho},\vartheta(\varrho)\right)d\varrho \nonumber \\
		&&\hspace{1cm}-\int_{0}^{t} \varPsi(t-\varrho,\varrho)h(\varrho,\vartheta(\varrho))dw(\varrho)-\int_{0}^{t}  \int_{0}^{\varrho}\varPsi(t-\varrho,\varrho)\varPhi(\varrho,s)g\left(s,\vartheta_s,\vartheta(s)\right)dsd\varrho\nonumber \\
		&&\hspace{1cm}-\int_{0}^{t}  \int_{0}^{\varrho}\varPsi(t-\varrho,\varrho)\varPhi(\varrho,s)h(s,\vartheta(s))dw(s)d\varrho \Bigg].  
	\end{eqnarray} 
	\begin{eqnarray}\label{3.2}
		E\|\upsilon(t)\|^2&=&10\left\|Q^{-1}\right\|^2\Bigg[E\|\vartheta_1\|^2  +E\left\|\int_{0}^{\ell}\kappa(s)dw(s)\right\|^2+E\left\|{A}^{-1}(0)\phi(0)\right\|^2\nonumber\\
		&&\quad+E\left\|f(0,{A}^{-1}(0)\phi(0))\right\|^2+E\big\|f(\ell,\vartheta_{\ell})\big\|^2+E\left\|\int_{0}^{\ell}\varPsi(\ell-\varrho,\varrho) \mathcal{U}(\varrho)\phi(0)d\varrho\right\|^2 \nonumber \\
		&&\quad+E\left\|\int_{0}^{\ell} \varPsi(\ell-\varrho,\varrho)g\left(\varrho,\vartheta_{\varrho},\vartheta(\varrho)\right)d\varrho\right\|^2+E\left\|\int_{0}^{\ell} \varPsi(\ell-\varrho,\varrho)h(\varrho,\vartheta(\varrho))dw(\varrho)\right\|^2 \nonumber\\
		&&\quad+E\left\|\int_{0}^{\ell}  \int_{0}^{\varrho}\varPsi(\ell-\varrho,\varrho)\varPhi(\varrho,s)g\left(s,\vartheta_s,\vartheta(s)\right)dsd\varrho\right\|^2\nonumber  \\
		&&\quad+E\left\|\int_{0}^{\ell}  \int_{0}^{\varrho}\varPsi(\ell-\varrho,\varrho)\varPhi(\varrho,s)h(s,\vartheta(s))dw(s)d\varrho\right\|^2 \Bigg].  
	\end{eqnarray} 
	Combining with Remark~\ref{r1}, Lemma \ref{l2}, and assumptions (C2)-(C3) in (\ref{3.2}), we get
	\begin{eqnarray}\label{3.3}
		E\|\upsilon(t)\|^2
		&\leq&10m^2_2\Bigg[E\|\vartheta_1\|^2+Tr(Q)\int_{0}^{\ell}E\left\|\kappa(s)\right\|^2ds+C^2E\big\|\phi(0)\big\|^2+\tilde{L}_f\left(1+C^2E\left\|\phi(0)\right\|^2\right)\nonumber\\
		&&\quad+\tilde{L}_f\left(1+E\left\|\vartheta_{\ell}\right\|^2\right)+\left(\int_{0}^{\ell}E\|\varPsi(\ell-\varrho,\varrho) \mathcal{U}(\varrho)\phi(0)\|d\varrho\right)^2\nonumber \\
		&&\quad+\left(\int_{0}^{\ell}E\left\| \varPsi(\ell-\varrho,\varrho)g\left(\varrho,\vartheta_{\varrho},\vartheta(\varrho)\right)\right\|d\varrho\right)^2\nonumber\\
		&&\quad+Tr(Q)\left(\int_{0}^{\ell} E\left\|\varPsi(\ell-\varrho,\varrho)h(\varrho,\vartheta(\varrho))\right\|d\varrho\right)^2\nonumber\\
		&&\quad+\left(\int_{0}^{\ell}  \int_{0}^{\varrho}E\|\varPsi(\ell-\varrho,\varrho)\varPhi(\varrho,s)g\left(s,\vartheta_s,\vartheta(s)\right)\|dsd\varrho\right)^2\nonumber  \\
		&&\quad +Tr(Q)\left(\int_{0}^{\ell}  \int_{0}^{\varrho}E\|\varPsi(\ell-\varrho,\varrho)\varPhi(\varrho,s)h\left(s,\vartheta(s)\right)\|dsd\varrho\right)^2\Bigg].
	\end{eqnarray}
	Using the Lemmas~\ref{l3},~\ref{l5} and assumptions (C4)-(C5) in (\ref{3.3}), we get
	\begin{eqnarray}\label{3.4}
		E\|\upsilon(t)\|^2&\leq&10m^2_2\Bigg[E\|\vartheta_1\|^2+Tr(Q)\int_{0}^{\ell}E\left\|\kappa(s)\right\|^2ds+C^2(1+\tilde{L}_f)E\big\|\phi(0)\big\|^2\nonumber\\
		&&\quad+\tilde{L}_f\left(2+E\left\|\vartheta_{\ell}\right\|^2\right)+C^4E\left\|\phi(0)\right\|^2\left(\int_{0}^{\ell}(\ell-\varrho)^{\gamma-1}(1+\varrho^{\beta})d\varrho\right)^2 \nonumber\\
		&&\quad+C^2\left(\int_{0}^{\ell}(\ell-\varrho)^{\gamma-1}\left(E\left\|g\left(\varrho,\vartheta_{\varrho},\vartheta(\varrho)\right)\right\|^2\right)^{\frac{1}{2}}d\varrho\right)^{2}\nonumber\\
		&&\quad+C^2Tr(Q)\left(\int_{0}^{\ell}(\ell-\varrho)^{\gamma-1}E\left\|h\left(\varrho,\vartheta(\varrho)\right)\right\|_Qd\varrho\right)^2\nonumber\\
		&&\quad+C^4\left(\int_{0}^{\ell}\int_{0}^{\varrho}(\ell-\varrho)^{\gamma-1}(\varrho-s)^{\beta-1}\left(E\left\|g\left(s,\vartheta_{s},\vartheta(s)\right)\right\|^2\right)^{\frac{1}{2}}dsd\varrho \right)^2\nonumber\\
		&&\quad+C^4Tr(Q)\left(\int_{0}^{\ell}\int_{0}^{\varrho}(\ell-\varrho)^{\gamma-1}(\varrho-s)^{\beta-1}E\|h(s,\vartheta(s))\|_Qdsd\varrho \right)^2\Bigg]\nonumber\\
		&\leq&10m^2_2\Bigg[E\|\vartheta_1\|^2+Tr(Q)\int_{0}^{\ell}E\left\|\kappa(s)\right\|^2ds+C^2(1+\tilde{L}_f)E\big\|\phi(0)\big\|^2\nonumber\\
		&&\quad+\tilde{L}_f\left(2+E\left\|\vartheta_{\ell}\right\|^2\right)+C^4E\left\|\phi(0)\right\|^2\left(\int_{0}^{\ell}(\ell-\varrho)^{\gamma-1}(1+\varrho^{\beta})d\varrho\right)^2\nonumber \\
		&&\quad+C^2\left(\int_{0}^{\ell}(\ell-\varrho)^{\gamma-1}\left(\mathcal{G}_r(\varrho)\right)^{\frac{1}{2}}d\varrho\right)^{2}\nonumber\\
		&&\quad+C^2Tr(Q)\left(\int_{0}^{\ell}(\ell-\varrho)^{\frac{\gamma-1}{2}} \cdot(\ell-\varrho)^{\frac{\gamma-1}{2}}E\left\|h\left(\varrho,\vartheta(\varrho)\right)\right\|_Qd\varrho\right)^2\nonumber\\
		&&+C^4{(\mathbf{B}(\gamma,\beta))}^2\left(\int_{0}^{\ell}(\ell-\varrho)^{\beta+\gamma-1}\left(\mathcal{G}_r(\varrho)\right)^{\frac{1}{2}}d\varrho \right)^2\nonumber\\
		&&+C^4Tr(Q){(\mathbf{B}(\gamma,\beta))}^2\left(\int_{0}^{\ell}(\ell-\varrho)^{\frac{\beta+\gamma-1}{2}} \cdot(\ell-\varrho)^{\frac{\beta+\gamma-1}{2}} E\|h(\varrho,\vartheta(\varrho))\|d\varrho \right)^2\Bigg].\hspace{0.5cm}
	\end{eqnarray}
	Using H\"{o}lder inequality, Cauchy-Schwarz inequality and Lemma~\ref{l4}~in (\ref{3.4}), we get
	\begin{eqnarray*}		E\|\upsilon(t)\|^2&\leq&10m^2_2\Bigg[E\|\vartheta_1\|^2+Tr(Q)\int_{0}^{\ell}E\left\|\kappa(s)\right\|^2ds+C^2(1+\tilde{L}_f)E\big\|\phi(0)\big\|^2\\
		&&\quad+\tilde{L}_f\left(2+E\left\|\vartheta_{\ell}\right\|^2\right)+C^4E\left\|\phi(0)\right\|^2\ell^{2\gamma}\left( \frac{1}{\gamma}+\ell^{\beta}\mathbf{B}(\gamma,\beta+1)\right)^2\\
		&&\quad+\left( \int_{0}^{\ell} (\ell-\varrho)^{\frac{\gamma-1}{1-\gamma_1}}d\varrho \right)^{2(1-\gamma_1)} \left(\int_{0}^{\ell}\mathcal{G}^{\frac{1}{2\gamma_1}}_r(\varrho)d\varrho\right)^{2\gamma_1}\\
		&&\quad+C^2Tr(Q)\left(\int_{0}^{\ell}(\ell-\varrho)^{\gamma-1}d\varrho\right)\left(\int_{0}^{\ell}(\ell-\varrho)^{\gamma-1}E\left\|h\left(\varrho,\vartheta(\varrho)\right)\right\|_Q^2d\varrho\right)\\
		&&\quad+C^4{(\mathbf{B}(\gamma,\beta))}^2\left( \int_{0}^{\ell} (\ell-\varrho)^{\frac{\beta+\gamma-1}{1-\gamma_1}}d\varrho \right)^{2(1-\gamma_1)} \left(\int_{0}^{\ell}\mathcal{G}^{\frac{1}{2\gamma_1}}_r(\varrho)d\varrho\right)^{2\gamma_1}\\
		&&\quad+C^4Tr(Q){(\mathbf{B}(\gamma,\beta))}^2\left(\int_{0}^{\ell}(\ell-\varrho)^{\beta+\gamma-1}d\varrho\right)\times\\
		&&\quad\hspace{5cm}\left(\int_{0}^{\ell}(\ell-\varrho)^{\beta+\gamma-1}E\left\|h\left(\varrho,\vartheta(\varrho)\right)\right\|_Q^2d\varrho\right)\Bigg]\\
		&\leq& 10m^2_2\Bigg[E\|\vartheta_1\|^2+Tr(Q)\int_{0}^{\ell}E\left\|\kappa(s)\right\|^2ds+C^2(1+\tilde{L}_f)E\big\|\phi(0)\big\|^2\\
		&&\quad+\tilde{L}_f\left(2+E\left\|\vartheta_{\ell}\right\|^2\right)+C^4E\left\|\phi(0)\right\|^2\ell^{2\gamma}\left(\frac{1}{\gamma}+\ell^{\beta}\mathbf{B}(\gamma,\beta+1)\right)^2\\
		&&\quad+C^2\left(\frac{1-\gamma_1}{\gamma-\gamma_1}\right)^{2(1-\gamma_1)}{\ell}^{2(\gamma-\gamma_1)}\|\mathcal{G}_r\|_{{L}^{\frac{1}{2\gamma_1}}[0,\ell]} +C^2Tr(Q)\tilde{L}_h\frac{\ell^{2\gamma}}{\gamma^2}(1+E\|\vartheta\|^2_Z)\\
		&&\quad+C^4{(\mathbf{B}(\gamma,\beta))}^2\left(\frac{1-\gamma_1}{\beta+\gamma-\gamma_1}\right)^{2(1-\gamma_1)}{\ell}^{2(\beta+\gamma-\gamma_1)}\|\mathcal{G}_r\|_{{L}^{\frac{1}{2\gamma_1}}[0,\ell]}\\
		&&\quad+C^4Tr(Q){(\mathbf{B}(\gamma,\beta))}^2\tilde{L}_h\frac{\ell^{2(\beta+\gamma)}}{(\beta+\gamma)^2}(1+E\|\vartheta\|^2_Z)\bigg]\\
		&=& 10m^2_2\Bigg[E\|\vartheta_1\|^2+Tr(Q)\int_{0}^{\ell}E\left\|\kappa(s)\right\|^2ds+C^2(1+\tilde{L}_f)E\big\|\phi(0)\big\|^2\\
		&&\quad+\tilde{L}_f\left(2+E\left\|\vartheta_{\ell}\right\|^2\right)+C^4E\left\|\phi(0)\right\|^2\ell^{2\gamma}\left(\frac{1}{\gamma}+\ell^{\beta}\mathbf{B}(\gamma,\beta+1)\right)^2\\
		&&\quad+C^2{\ell}^{2(\gamma-\gamma_1)}\left[\left(\frac{1-\gamma_1}{\gamma-\gamma_1}\right)^{2(1-\gamma_1)}+C^2{\ell}^{2\beta}{(\mathbf{B}(\gamma,\beta))}^2\left(\frac{1-\gamma_1}{\beta+\gamma-\gamma_1}\right)^{2(1-\gamma_1)}\right]\times\\
		&&\quad\|\mathcal{G}_r\|_{{L}^{\frac{1}{2\gamma_1}}[0,\ell]}+C^2Tr(Q)\tilde{L}_h\ell^{2\gamma}\left[\frac{1}{\gamma^2}+C^2{(\mathbf{B}(\gamma,\beta))}^2\frac{\ell^{2\beta}}{(\beta+\gamma)^2}\right](1+E\|\vartheta\|^2_Z)\bigg]\\
		&=& 10m^2_2\Bigg[E\|\vartheta_1\|^2+Tr(Q)\int_{0}^{\ell}E\left\|\kappa(s)\right\|^2ds+C^2(1+\tilde{L}_f)E\big\|\phi(0)\big\|^2\\
		&&\quad+\tilde{L}_f\left(2+E\left\|\vartheta_{\ell}\right\|^2\right)+C^4E\left\|\phi(0)\right\|^2\ell^{2\gamma}\lambda_1^2+C^2{\ell}^{2(\gamma-\gamma_1)}\lambda_2\|\mathcal{G}_r\|_{{L}^{\frac{1}{2\gamma_1}}[0,\ell]}\\
		&&\hspace{2cm}+C^2Tr(Q)\tilde{L}_h\ell^{2\gamma}\lambda_3(1+E\|\vartheta\|^2_Z)\bigg]\\
		&=&l_1+l_2\|\mathcal{G}_r\|_{{L}^{\frac{1}{2\gamma_1}}[0,\ell]}+l_3(1+E\|\vartheta\|^2).
	\end{eqnarray*}
\end{proof}
\begin{theorem} \label{t3.2}
	If the assumptions (C1)-(C5) hold, then the system (\ref{1.1}) is controllable if
	\begin{eqnarray}\label{3.5}
		\dfrac{10C^2\ell^{2(\gamma-\gamma_1)}(m_1^2\lambda_4l_2\ell^{2\gamma_1}+\lambda_2)\delta_1}{1-10\tilde{L}_f-10C^2\ell^{2\gamma}(m_1^2\lambda_4l_3-Tr(Q)\tilde{L}_h\lambda_3)}< 1,
	\end{eqnarray}
	and \begin{eqnarray}\label{3.6}
		\hat{L}_f+C\ell^{\gamma}\lambda_5(2Cm_1m_2\lambda_5\ell^{\gamma}+1)\left(2\hat{L}_g+\hat{L}_h\sqrt{\ell Tr(Q)}\right)<1.
	\end{eqnarray}
\end{theorem}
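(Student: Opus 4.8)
The plan is to recast controllability as a fixed point problem. By assumption (C2) and Lemma~\ref{l1}, for each trajectory $\vartheta$ define the steering control $\upsilon=\upsilon_{\vartheta}$ by the formula \eqref{3.1}; a direct computation (cancelling the control terms through $QQ^{-1}=I$ and using $\vartheta_1=E\vartheta_1+\int_{0}^{\ell}\kappa(s)\,dw(s)$) shows that the right-hand side of the mild-solution identity of Definition~\ref{d1}, evaluated at $t=\ell$, equals $\vartheta_1$. Accordingly, define the operator $\Pi$ on $\mathcal{Z}$ by $(\Pi\vartheta)_0=A^{-1}(0)\phi$ on $J_0$ and, for $t\in J$,
\begin{align*}
(\Pi\vartheta)(t)&=A^{-1}(0)\phi(0)-f\big(0,A^{-1}(0)\phi(0)\big)+f(t,\vartheta_t)+\int_{0}^{t}\varPsi(t-\varrho,\varrho)\mathcal{U}(\varrho)\phi(0)\,d\varrho\\
&\quad+\int_{0}^{t}\varPsi(t-\varrho,\varrho)\big[B\upsilon_{\vartheta}(\varrho)+g\big(\varrho,\vartheta_{\varrho},\vartheta(\varrho)\big)\big]\,d\varrho+\int_{0}^{t}\varPsi(t-\varrho,\varrho)h(\varrho,\vartheta(\varrho))\,dw(\varrho)\\
&\quad+\int_{0}^{t}\!\int_{0}^{\varrho}\varPsi(t-\varrho,\varrho)\varPhi(\varrho,s)\big[B\upsilon_{\vartheta}(s)+g\big(s,\vartheta_s,\vartheta(s)\big)\big]\,ds\,d\varrho\\
&\quad+\int_{0}^{t}\!\int_{0}^{\varrho}\varPsi(t-\varrho,\varrho)\varPhi(\varrho,s)h(s,\vartheta(s))\,dw(s)\,d\varrho.
\end{align*}
By Lemma~\ref{l12} and (C2) the control $\upsilon_{\vartheta}$ lies in $L^2_{\Upsilon}([0,\ell],U)$, and together with the kernel estimates of Lemma~\ref{l3} and the growth bounds (C3)(ii), (C4)(iii), (C5)(ii) this makes $\Pi$ well defined from $\mathcal{Z}$ into $\mathcal{Z}$. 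Any fixed point of $\Pi$ lying in $\Theta_r$ is a mild solution of \eqref{1.1} with $\vartheta(\ell)=\vartheta_1$, so it suffices to produce a fixed point in some $\Theta_r$, which by Definition~\ref{d6} establishes controllability on $J$.

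\noindent\textbf{Step 1: a self-map.} For $\vartheta\in\Theta_r$, I would estimate $E\|(\Pi\vartheta)(t)\|^2$ by splitting it into the nine summands above and repeating the computation of the proof of Lemma~\ref{l12}, using Remark~\ref{r1}, Lemmas~\ref{l2}--\ref{l5}, the H\"older and Cauchy--Schwarz inequalities, the bounds (C3)(ii), (C4)(iii), (C5)(ii), and the a priori control estimate $E\|\upsilon(t)\|^2\le l_1+l_2\|\mathcal{G}_r\|_{L^{1/(2\gamma_1)}[0,\ell]}+l_3(1+E\|\vartheta\|^2)$ of Lemma~\ref{l12}. This yields an inequality $E\|(\Pi\vartheta)(t)\|^2\le a+b\,\|\mathcal{G}_r\|_{L^{1/(2\gamma_1)}[0,\ell]}+c\,(1+r)$ with constants $a,b,c$ depending only on the data (assembled through the $\lambda_i$ and $l_i$). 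Dividing by $r$, letting $r\to\infty$ and using (C4)(iii) together with the smallness hypothesis \eqref{3.5}, one finds $r>0$ with $\sup_{t\in J}E\|(\Pi\vartheta)(t)\|^2\le r$; hence $\Pi(\Theta_r)\subseteq\Theta_r$, a nonempty, bounded, closed, convex subset of $\mathcal{Z}$.

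\noindent\textbf{Step 2: continuity, equicontinuity and $k$-set contraction.} Continuity of $\Pi$ on $\Theta_r$ follows from the Lipschitz/continuity assumptions (C3)(i), (C4)(ii), (C5)(i), the continuity of $\vartheta\mapsto\upsilon_{\vartheta}$ in \eqref{3.1}, Lemma~\ref{l3}, dominated convergence for the Bochner integrals and Lemma~\ref{l2} for the It\^o integrals. Using that $\varPsi(t-\varrho,\varrho)$ and $A(t)\varPsi(t-\varrho,\varrho)$ are continuous in the uniform operator topology off the diagonal (Lemma~\ref{l3}) and truncating the weakly singular kernels near $\varrho=t$, one shows $\Pi(\Theta_r)$ is equicontinuous on $J$, so that by Lemma~\ref{l7} one has $\mu_C(\Pi(\mathcal{P}))=\max_{t\in J}\mu((\Pi\mathcal{P})(t))$ for every bounded $\mathcal{P}\subseteq\Theta_r$. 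For the set-contraction estimate, by Lemma~\ref{l8} reduce to a countable $\mathcal{P}_0\subseteq\mathcal{P}$, apply the properties of $\mu$ in Lemma~\ref{l6} to split $\mu((\Pi\mathcal{P}_0)(t))$ over the nine terms, and bound each: the neutral term by $\hat{L}_f\mu(\mathcal{P}_0(t))$ from (C3)(iii); the two deterministic $g$-integrals, via Lemmas~\ref{l9} and~\ref{l5} and (C4)(iv), by a multiple of $\hat{L}_g\sup_{s\in J}\mu(\mathcal{P}_0(s))$; the two It\^o $h$-integrals, via Lemma~\ref{l10} and (C5)(iii), by a multiple of $\hat{L}_h\sqrt{\ell\,\mathrm{Tr}(Q)}\sup_{s\in J}\mu(\mathcal{P}_0(s))$; and the two control terms — since the feedback \eqref{3.1} is assembled from $f,g,h$ evaluated along $\vartheta$ — by the same combination multiplied by $m_1$ (from (C1)) and by the kernel factors gathered into $\lambda_5$. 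Collecting the constants and invoking Lemma~\ref{l7} gives
\begin{eqnarray*}
\mu_C(\Pi(\mathcal{P}))\le\Big[\hat{L}_f+C\ell^{\gamma}\lambda_5\big(2Cm_1m_2\lambda_5\ell^{\gamma}+1\big)\big(2\hat{L}_g+\hat{L}_h\sqrt{\ell\,\mathrm{Tr}(Q)}\big)\Big]\mu_C(\mathcal{P}),
\end{eqnarray*}
whose bracketed coefficient equals some $k\in[0,1)$ by \eqref{3.6}. Hence $\Pi$ is $k$-set-contractive on $\Theta_r$, and Lemma~\ref{l11} provides a fixed point $\vartheta^{\ast}\in\Theta_r$; by the construction of the control $\vartheta^{\ast}(\ell)=\vartheta_1$, so \eqref{1.1} is controllable on $J$.

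\noindent\textbf{Main obstacle.} The delicate part will be the $k$-set-contraction step: one must estimate the measure of non-compactness of the two control-dependent integral terms — the feedback \eqref{3.1} depends on the entire trajectory and re-enters both a single and a double weakly singular integral — and one must first establish equicontinuity of $\Pi(\Theta_r)$ in order to replace $\mu_C$ by the pointwise measures $\mu((\Pi\mathcal{P})(t))$, all uniformly in the presence of the infinite delay and the singular kernels $(t-\varrho)^{\gamma-1}$ and $(\varrho-s)^{\beta-1}$.
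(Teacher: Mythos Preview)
Your proposal is correct and follows essentially the same approach as the paper: define the solution operator with the feedback control \eqref{3.1}, use the estimate of Lemma~\ref{l12} together with \eqref{3.5} to obtain a self-map on some $\Theta_r$, establish continuity and equicontinuity via Lemma~\ref{l3}, and then prove a $k$-set contraction estimate with constant given by \eqref{3.6} using Lemmas~\ref{l6}--\ref{l10} before applying Lemma~\ref{l11}. The only cosmetic difference is that the paper passes to $\mathcal{Q}=\overline{\mathrm{conv}}\,\mathcal{F}(\Theta_r)$ before the final step, whereas you work directly on $\Theta_r$; either variant is fine since $\Theta_r$ is already closed, bounded and convex and $\Pi(\Theta_r)$ is equicontinuous.
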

\begin{proof}Define the operator  $\mathcal{F}:\Theta_r\rightarrow \Theta_r$ given by
	\begin{eqnarray}\label{3.7}
		(\mathcal{F}\vartheta)(t)&=&{A}^{-1}(0)\phi(t), \quad t\in (-\infty,0],\nonumber\\
		(\mathcal{F}\vartheta)(t)&=&{A}^{-1}(0)\phi(0)-f(0,{A}^{-1}(0)\phi(0))+f(t,\vartheta_t)+\displaystyle\int_{0}^{t}\varPsi(t-\varrho,\varrho)\mathcal{U}(\varrho)\phi(0)d\varrho\nonumber\\
		&&\quad+\displaystyle\int_{0}^{t}\varPsi(t-\varrho,\varrho)\big[{B}\upsilon(\varrho)+g\left(\varrho,\vartheta_{\varrho},\vartheta(\varrho)\right)\big]d\varrho +\displaystyle\int_{0}^{t}\varPsi(t-\varrho,\varrho)h(\varrho,\vartheta(\varrho))dw(\varrho)\nonumber\\
		&&\quad+\displaystyle\int_{0}^{t}  \int_{0}^{\varrho}\varPsi(t-\varrho,\varrho)\varPhi(\varrho,s)\big[{B}\upsilon(s)+g\left(s,\vartheta_s,\vartheta(s)\right)\big]dsd\varrho\nonumber\\
		&&\quad+\displaystyle\int_{0}^{t}  \int_{0}^{\varrho}\varPsi(t-\varrho,\varrho)\varPhi(\varrho,s)h(s,\vartheta(s))dw(s)d\varrho,\quad t\in [0,\ell],
	\end{eqnarray} where the control function $\upsilon$ is given by (\ref{3.1}).
	
	\noindent We will show that the operator $\mathcal{F}$ has a fixed point. We prove this result in four steps.
	
	\noindent {\bf Step 1:} In this step,  we show that $\mathcal{F}(\Theta_r)\subset \Theta_r$. Suppose that our claim is not true, then there exist  $t_0 \in J$ and $\tilde{\vartheta}\in\Theta_r$  such that $E\big\|\big(\mathcal{F}\tilde{\vartheta}\big)(t_0)\big\|^2>r$ for every $r>0.$ 
	
	\noindent From (\ref{3.7}), we see that
	\begin{eqnarray}\label{3.8}
		r&<&E\big\|\big(\mathcal{F}\tilde{\vartheta}\big)(t_0)\big\|^2\nonumber\\
		&\leq&10\Bigg[E\left\|{A}^{-1}(0)\phi(0)\right\|^2+E\|f(0,{A}^{-1}(0)\phi(0))\|^2+E\|f(t_0,\tilde{\vartheta}_{t_0})\|^2\nonumber\\
		&&\quad+E\left\|\displaystyle\int_{0}^{t_0}\varPsi(t_0-\varrho,\varrho)\mathcal{U}(\varrho)\phi(0)d\varrho\right\|^2+E\left\|\displaystyle\int_{0}^{t_0}\varPsi(t_0-\varrho,\varrho){B}\upsilon(\varrho)d\varrho\right\|^2\nonumber\\
		&&\quad +E\left\| \displaystyle\int_{0}^{t_0} \varPsi(t_0-\varrho,\varrho)g\left(\varrho,\tilde{\vartheta}_{\varrho},\tilde{\vartheta}(\varrho)\right)d\varrho \right\|^2   +E\left\|\displaystyle\int_{0}^{t_0}\varPsi(t_0-\varrho,\varrho)h(\varrho,\tilde{\vartheta}(\varrho))dw(\varrho)\right\|^2\nonumber\\
		&&\quad+E\left\|\displaystyle\int_{0}^{t_0}  \int_{0}^{\varrho}\varPsi(t_0-\varrho,\varrho)\varPhi(\varrho,s){B}\upsilon(s)dsd\varrho\right\|^2\nonumber\\
		&&\quad+E\left\|\displaystyle\int_{0}^{t_0}  \int_{0}^{\varrho}\varPsi(t_0-\varrho,\varrho)\varPhi(\varrho,s)g\left(s,\tilde{\vartheta}_s,\tilde{\vartheta}(s)\right)dsd\varrho\right\|^2\nonumber\\
		&&\quad+E\left\|\displaystyle\int_{0}^{t_0}  \int_{0}^{\varrho}\varPsi(t_0-\varrho,\varrho)\varPhi(\varrho,s)h(s,\tilde{\vartheta}(s))dw(s)d\varrho\right\|^2\Bigg].
	\end{eqnarray}
	Combining with Remark~\ref{r1}, Lemmas~ \ref{l1},~\ref{l2},~\ref{l3},~\ref{l5} and assumptions (C1)-(C5) in (\ref{3.8}), we get
	\begin{eqnarray}\label{3.9}
		r&\leq&10\Bigg[C^2E\big\|\phi(0)\big\|^2+\tilde{L}_{f}\left(1+C^2E\left\|\phi(0)\right\|^2\right)+\tilde{L}_f\left(1+E\big\|\tilde{\vartheta}_{t_0}\big\|^2\right)\nonumber\\
		&&\quad+C^4E\left\|\phi(0)\right\|^2\left(\int_{0}^{t_0}(t_0-\varrho)^{\gamma-1}(1+\varrho^{\beta})d\varrho\right)^2+C^2m_1^2\left(\int_{0}^{t_0}(t_0-\varrho)^{\gamma-1}E\|\upsilon(\varrho)\|d\varrho\right)^2\nonumber\\	&&\quad+C^2\left(\int_{0}^{t_0}(t_0-\varrho)^{\gamma-1}\left(E\big\|g\big(\varrho,\tilde{\vartheta}_{\varrho},\tilde{\vartheta}(\varrho)\big)\big\|^2\right)^{\frac{1}{2}}d\varrho\right)^{2}\nonumber\\
		&&\quad+C^2Tr(Q)\left(\int_{0}^{t_0}(t_0-\varrho)^{\gamma-1}E\big\|h\big(\varrho,\tilde{\vartheta}(\varrho)\big)\big\|_Qd\varrho\right)^2\nonumber\\
		&&\quad+C^4m_1^2\left(\int_{0}^{t_0}\int_{0}^{\varrho}(t_0-\varrho)^{\gamma-1}(\varrho-s)^{\beta-1}E\|\upsilon(s)\|dsd\varrho \right)^2\nonumber\\
		&&\quad+C^4\left(\int_{0}^{t_0}\int_{0}^{\varrho}(t_0-\varrho)^{\gamma-1}(\varrho-s)^{\beta-1}\left(E\big\|g\big(s,\tilde{\vartheta}_{s},\tilde{\vartheta}(s)\big)\big\|^2\right)^{\frac{1}{2}}dsd\varrho \right)^2\nonumber\\
		&&\quad +C^4Tr(Q)\left(\int_{0}^{t_0}\int_{0}^{\varrho}(t_0-\varrho)^{\gamma-1}(\varrho-s)^{\beta-1}E\|h(s,\tilde{\vartheta}(s))\|_Qdsd\varrho \right)^2
		\Bigg]\nonumber\\
		&\leq&10\Bigg[C^2(1+\tilde{L}_{f})E\big\|\phi(0)\big\|^2+\tilde{L}_f\left(2+E\big\|\tilde{\vartheta}_{t_0}\big\|^2\right)+C^4E\left\|\phi(0)\right\|^2\left(\int_{0}^{t_0}(t_0-\varrho)^{\gamma-1}(1+\varrho^{\beta})d\varrho\right)^2\nonumber\\
		&&\quad+C^2m_1^2\left(\int_{0}^{t_0}(t_0-\varrho)^{\gamma-1}E\|\upsilon(\varrho)\|d\varrho\right)^2+C^2\left(\int_{0}^{t_0}(t_0-\varrho)^{\gamma-1}\left(\mathcal{G}_r(\varrho)\right)^{\frac{1}{2}}d\varrho\right)^{2}\nonumber\\	&&\quad+C^2Tr(Q)\left(\int_{0}^{t_0}(t_0-\varrho)^{\frac{\gamma-1}{2}} \cdot(t_0-\varrho)^{\frac{\gamma-1}{2}}E\big\|h\big(\varrho,\tilde{\vartheta}(\varrho)\big)\big\|_Qd\varrho\right)^2\nonumber\\
		&&\quad+C^4m_1^2{(\mathbf{B}(\gamma,\beta))}^2\left(\int_{0}^{t_0}(t_0-\varrho)^{\beta+\gamma-1}E\|\upsilon(\varrho)\|d\varrho \right)^2\nonumber\\
		&&\quad+C^4{(\mathbf{B}(\gamma,\beta))}^2\left(\int_{0}^{t_0}(t_0-\varrho)^{\beta+\gamma-1}\left(\mathcal{G}_r(\varrho)\right)^{\frac{1}{2}}d\varrho \right)^2\nonumber\\
		&&\quad+C^4Tr(Q){(\mathbf{B}(\gamma,\beta))}^2\left(\int_{0}^{t_0}(t_0-\varrho)^{\frac{\beta+\gamma-1}{2}} \cdot(t_0-\varrho)^{\frac{\beta+\gamma-1}{2}} E\big\|h(\varrho,\tilde{\vartheta}(\varrho))\big\|_Qd\varrho \right)^2\Bigg].
	\end{eqnarray}
	Using H\"{o}lder inequality, Cauchy-Schwarz inequality and Lemma~\ref{l4}~ in the equation (\ref{3.9}), we get
	\begin{eqnarray*}
		r&\leq&10\Bigg[C^2(1+\tilde{L}_{f})E\big\|\phi(0)\big\|^2+\tilde{L}_f\left(2+E\big\|\tilde{\vartheta}_{t_0}\big\|^2\right)+C^4E\left\|\phi(0)\right\|^2t_0^{2\gamma} \left(\frac{1}{\gamma}+{t_0}^{\beta}\mathbf{B}(\gamma,\beta+1)\right)^2\\
		&&\quad+m_1^2C^2\left(\int_{0}^{t_0}(t_0-\varrho)^{2(\gamma-1)}d\varrho\right)\left(\int_{0}^{t_0}E\|\upsilon(\varrho)\|^2d\varrho\right) \\
		&&\quad+C^2\left( \int_{0}^{t_0} (t_0-\varrho)^{\frac{\gamma-1}{1-\gamma_1}}d\varrho \right)^{2(1-\gamma_1)} \left(\int_{0}^{t_0}\mathcal{G}^{\frac{1}{2\gamma_1}}_r(\varrho)d\varrho\right)^{2\gamma_1} \\ &&\quad+C^2Tr(Q)\left(\int_{0}^{t_0}(t_0-\varrho)^{\gamma-1}d\varrho\right)\left(\int_{0}^{t_0}(t_0-\varrho)^{\gamma-1}E\big\|h\big(\varrho,\tilde{\vartheta}(\varrho)\big)\big\|_Q^2d\varrho\right)\\
		&&\quad+C^4m_1^2{(\mathbf{B}(\gamma,\beta))}^2\left( \int_{0}^{t_0} (t_0-\varrho)^{2(\beta+\gamma-1)}d\varrho \right) \left(\int_{0}^{t_0}E\|\upsilon(\varrho)\|^2d\varrho\right)\\
		&&\quad+C^4{(\mathbf{B}(\gamma,\beta))}^2\left( \int_{0}^{t_0} (t_0-\varrho)^{\frac{\beta+\gamma-1}{1-\gamma_1}}d\varrho \right)^{2(1-\gamma_1)} \left(\int_{0}^{t_0}\mathcal{G}^{\frac{1}{2\gamma_1}}_r(\varrho)d\varrho\right)^{2\gamma_1}\\
		&&\quad+C^4Tr(Q){(\mathbf{B}(\gamma,\beta))}^2\left(\int_{0}^{t_0}(t_0-\varrho)^{\beta+\gamma-1}d\varrho\right)\left(\int_{0}^{t_0}(t_0-\varrho)^{\beta+\gamma-1}E\big\|h\big(\varrho,\tilde{\vartheta}(\varrho)\big)\big\|_Q^2d\varrho\right)\Bigg] \\
		&\leq&10\Bigg[C^2(1+\tilde{L}_{f})E\big\|\phi(0)\big\|^2+\tilde{L}_f\left(2+E\big\|\tilde{\vartheta}_{\ell}\big\|^2\right)+C^4E\left\|\phi(0)\right\|^2{\ell}^{2\gamma}\left(\frac{1}{\gamma}+{\ell}^{\beta}\mathbf{B}(\gamma,\beta+1)\right)^2\\
		&&\quad+\frac{m_1^2C^2}{2\gamma-1}\left(l_1+l_2\|\mathcal{G}_r\|_{{L}^{\frac{1}{2\gamma_1}}[0,\ell]}+l_3\left(1+E\big\|\tilde{\vartheta}\big\|^2\right)\right)\ell^{2\gamma}\\
		&&\quad+C^2\left(\frac{1-\gamma_1}{\gamma-\gamma_1}\right)^{2(1-\gamma_1)}{\ell}^{2(\gamma-\gamma_1)}\|\mathcal{G}_r\|_{{L}^{\frac{1}{2\gamma_1}}[0,\ell]} +C^2Tr(Q)\tilde{L}_h\frac{\ell^{2\gamma}}{\gamma^2}\left(1+E\big\|\tilde{\vartheta}\big\|^2_Z\right)\\
		&&\quad+\frac{C^4m_1^2{(\mathbf{B}(\gamma,\beta))}^2}{2\gamma+2\beta-1}\left(l_1+l_2\|\mathcal{G}_r\|_{{L}^{\frac{1}{2\gamma_1}}[0,\ell]}+l_3\left(1+E\big\|\tilde{\vartheta}\big\|^2\right)\right)\ell^{2(\gamma+\beta)}\\			&&\quad+C^4{(\mathbf{B}(\gamma,\beta))}^2\left(\frac{1-\gamma_1}{\beta+\gamma-\gamma_1}\right)^{2(1-\gamma_1)}{\ell}^{2(\beta+\gamma-\gamma_1)}\|\mathcal{G}_r\|_{{L}^{\frac{1}{2\gamma_1}}[0,\ell]}\\
		&&\quad+C^4Tr(Q){(\mathbf{B}(\gamma,\beta))}^2\tilde{L}_h\frac{\ell^{2(\beta+\gamma)}}{(\beta+\gamma)^2}\left(1+E\|\tilde{\vartheta}\|^2_Z\right)\Bigg]\\
		&\leq&10\Bigg[C^2\left((1+\tilde{L}_{f})+C^2{\ell}^{2\gamma}\lambda_1^2\right)E\left\|\phi(0)\right\|^2+\tilde{L}_f(2+r)\\
		&&\quad+m_1^2C^2\ell^{2\gamma}\left[\frac{1}{2\gamma-1}+\ell^{2\beta}\frac{C^2{(\mathbf{B}(\gamma,\beta))}^2}{2\gamma+2\beta-1}\right]\left(l_1+l_2\|\mathcal{G}_r\|_{{L}^{\frac{1}{2\gamma_1}}[0,\ell]}+l_3\left(1+r\right)\right)\\
		&&\quad+C^2{\ell}^{2(\gamma-\gamma_1)}\left[\left(\frac{1-\gamma_1}{\gamma-\gamma_1}\right)^{2(1-\gamma_1)}+C^2{\ell}^{2\beta}{(\mathbf{B}(\gamma,\beta))}^2\left(\frac{1-\gamma_1}{\beta+\gamma-\gamma_1}\right)^{2(1-\gamma_1)}\right]\|\mathcal{G}_r\|_{{L}^{\frac{1}{2\gamma_1}}[0,\ell]}\\
		&&\quad+C^2Tr(Q)\tilde{L}_h\ell^{2\gamma}\left[\frac{1}{\gamma^2}+C^2{(\mathbf{B}(\gamma,\beta))}^2\frac{\ell^{2\beta}}{(\beta+\gamma)^2}\right](1+r)\Bigg]\\	
		&=&10\Bigg[C^2\left((1+\tilde{L}_{f})+C^2{\ell}^{2\gamma}\lambda_1^2\right)E\left\|\phi(0)\right\|^2+\tilde{L}_f(2+r)\\
		&&\quad+m_1^2C^2\ell^{2\gamma}\lambda_4\left(l_1+l_2\|\mathcal{G}_r\|_{{L}^{\frac{1}{2\gamma_1}}[0,\ell]}+l_3\left(1+r\right)\right)\\
		&&\quad+C^2{\ell}^{2(\gamma-\gamma_1)}\lambda_2\|\mathcal{G}_r\|_{{L}^{\frac{1}{2\gamma_1}}[0,\ell]}+C^2Tr(Q)\tilde{L}_h\ell^{2\gamma}\lambda_3(1+r)\Bigg].
	\end{eqnarray*}
	Dividing by $r$ and taking $\liminf$ as $r\rightarrow \infty,$ we get 
	\begin{eqnarray*}
		1&\leq& 10\bigg[\tilde{L}_f+m_1^2C^2\ell^{2\gamma}\lambda_4\left(l_2\delta_1+l_3\right)+C^2{\ell}^{2(\gamma-\gamma_1)}\lambda_2\delta_1+C^2Tr(Q)\tilde{L}_h\ell^{2\gamma}\lambda_3\bigg],
	\end{eqnarray*}
	or 	\begin{eqnarray*}
		1&\leq&\dfrac{10C^2\ell^{2(\gamma-\gamma_1)}(m_1^2\lambda_4l_2\ell^{2\gamma_1}+\lambda_2)\delta_1}{1-10\tilde{L}_f-10C^2\ell^{2\gamma}(m_1^2\lambda_4l_3-Tr(Q)\tilde{L}_h\lambda_3)} ,
	\end{eqnarray*}
	which contradicts to our assumption.  Therefore, $E\big\|\big(\mathcal{F}\tilde{\vartheta}\big)(t_0)\big\|^2<r$ for $t_0 \in J,~r>0.$ It is also true for $t_0 \in (-\infty,0].$  Hence $\mathcal{F}(\Theta_r)\subset \Theta_r$. 
	
	\noindent{\bf Step 2:} $\mathcal{F}$ is continuous on $\Theta_r.$ For $t\in (-\infty,0]$, it is trivial. We consider a sequence $\{\vartheta^n\}_{n\in \mathbb{N}}$ in $\Theta_r$ such that $\lim_{n\rightarrow\infty}E\|\vartheta^n-\vartheta\|^2=0$ in $\Theta_r.$  By the continuity of the nonlinear functions $f,g$ and $h$, we get that	
	\begin{itemize}
		\item[(i)]	$\lim\limits_{n\rightarrow\infty}E\left\|f\big(t,\vartheta_t^n\big)-f\big(t,\vartheta_t\big)\right\|^2=0,$
		\item[(ii)]$\lim\limits_{n\rightarrow\infty}E\left\|g\big(t,\vartheta_t^n,\vartheta^n(t)\big)-g\big(t,\vartheta_t,\vartheta(t)\big)\right\|^2=0$,
		\item[(iii)]$ \lim\limits_{n\rightarrow\infty}E\left\|h\big(t,\vartheta^n(t)\big)-h\big(t,\vartheta(t)\big)\right\|_Q^2=0.$
	\end{itemize}
	From the equation (\ref{3.7}) and Lemma~\ref{l3}, we get\\
	
	$E\|(\mathcal{F}\vartheta^n)(t)-(\mathcal{F}\vartheta)(t) \|^2$ 
	\begin{eqnarray}
		&\leq&5\Bigg[E\left\|f\big(t,\vartheta^n_t\big)-f\big(t,\vartheta_t\big)\right\|^2+E\left\|\int_{0}^{t} \varPsi(t-\varrho,\varrho)\Big[g\big(\varrho,\vartheta^n_{\varrho},\vartheta^n(\varrho)\big)-g\big(\varrho,\vartheta_{\varrho},\vartheta(\varrho)\big)\Big]d\varrho\right\|^2 \nonumber\\
		&&\quad+E\left\|\int_{0}^{t} \varPsi(t-\varrho,\varrho)\Big[h\big(\varrho,\vartheta^n(\varrho)\big)-h\big(\varrho,\vartheta(\varrho)\big)\Big]d\varrho\right\|^2 \nonumber\\
		&&\quad +E\left\|\displaystyle\int_{0}^{t}  \int_{0}^{\varrho}\varPsi(t-\varrho,\varrho)\varPhi(\varrho,s)\Big[g\big(s,\vartheta^n_{s},\vartheta^n(s)\big)-g\big(s,\vartheta_{s},\vartheta(s)\big)\Big]dsd\varrho\right\|^2 \nonumber \\ 
		&&\quad+E\left\|\displaystyle\int_{0}^{t}  \int_{0}^{\varrho}\varPsi(t-\varrho,\varrho)\varPhi(\varrho,s)\Big[h\big(s,\vartheta^n(s)\big)-h\big(s,\vartheta(s)\big)\Big]dw(s)d\varrho\right\|^2\Bigg]\nonumber\\
		&\leq&5\Bigg[L_fE\big\|\vartheta^n_t-\vartheta_t\big\|^2+C^2\left(\int_{0}^{t}(t-\varrho)^{\gamma-1}E\left\|g\big(\varrho,\vartheta^n_{\varrho},\vartheta^n(\varrho)\big)-g\big(\varrho,\vartheta_{\varrho},\vartheta(\varrho)\big) \right\|d\varrho\right)^2 \nonumber\\
		&&\quad+C^2Tr(Q)\left(\int_{0}^{t}(t-\varrho)^{\gamma-1}E\left \|h\big(\varrho,\vartheta^n(\varrho)\big)-h\big(\varrho,\vartheta(\varrho)\big)\right\|_Qd\varrho\right)^2 \nonumber\\
		&&\quad+C^4\left(\int_{0}^{t}\int_{0}^{\varrho}(t-\varrho)^{\gamma-1}(\varrho-s)^{\beta-1}E\left\|g\big(s,\vartheta^n_{s},\vartheta^n(s)\big)-g\big(s,\vartheta_{s},\vartheta(s)\big)\right\|dsd\varrho \right)^2\nonumber\\
		&&\quad+C^4Tr(Q)\left(\int_{0}^{t}\int_{0}^{\varrho}(t-\varrho)^{\gamma-1}(\varrho-s)^{\beta-1}E\left\|h\big(\varrho,\vartheta^n(\varrho)\big)-h\big(\varrho,\vartheta(\varrho)\big)\right\|_Qdsd\varrho \right)^2\Bigg]\nonumber\\
		&\leq& 5\Bigg[L_fE\big\|\vartheta^n_t-\vartheta_t\big\|^2+C^2\left(\int_{0}^{t}(t-\varrho)^{\gamma-1}E\left\|g\big(\varrho,\vartheta^n_{\varrho},\vartheta^n(\varrho)\big)-g\big(\varrho,\vartheta_{\varrho},\vartheta(\varrho)\big) \right\|d\varrho\right)^2 \nonumber\\
		&&\quad+C^2Tr(Q)\left(\int_{0}^{t}(t-\varrho)^{\gamma-1}E\left \|h\big(\varrho,\vartheta^n(\varrho)\big)-h\big(\varrho,\vartheta(\varrho)\big)\right\|_Qd\varrho\right)^2 \nonumber\\
		&&\quad+C^4\left(\int_{0}^{t}(t-\varrho)^{\beta+\gamma-1}E\left\|g\big(\varrho,\vartheta^n_{\varrho},\vartheta^n(\varrho)\big)-g\big(\varrho,\vartheta_{\varrho},\vartheta(\varrho)\big)\right\|d\varrho \right)^2\nonumber\\
		&&\quad+C^4Tr(Q)\left(\int_{0}^{t}(t-\varrho)^{\beta+\gamma-1}E\|h\big(\varrho,\vartheta^n(\varrho)\big)-h\big(\varrho,\vartheta(\varrho)\big)\|_Qd\varrho \right)^2\Bigg]\nonumber\\
		&\leq&5\Bigg[L_fE\big\|\vartheta^n_t-\vartheta_t\big\|^2+\left(\int_{0}^{t}(t-\varrho)^{2(\gamma-1)}d\varrho\right)\left(\int_{0}^{t}E\left\|g\big(\varrho,\vartheta^n_{\varrho},\vartheta^n(\varrho)\big)-g\big(\varrho,\vartheta_{\varrho},\vartheta(\varrho)\big) \right\|^2d\varrho\right)\nonumber\\
		&&\quad+C^2Tr(Q)\left(\int_{0}^{t}(t-\varrho)^{2(\gamma-1)}d\varrho\right)\left(\int_{0}^{t}E\left\|h\big(\varrho,\vartheta^n(\varrho)\big)-h\big(\varrho,\vartheta(\varrho)\big) \right\|^2_Qd\varrho\right) \nonumber\\
		&&\quad+C^4\left(\int_{0}^{t}(t-\varrho)^{2(\beta+\gamma-1)}d\varrho\right)\left(\int_{0}^{t}E\left\|g\big(\varrho,\vartheta^n_{\varrho},\vartheta^n(\varrho)\big)-g\big(\varrho,\vartheta_{\varrho},\vartheta(\varrho)\big) \right\|^2d\varrho\right)\nonumber\\
		&&\quad+C^4Tr(Q)\left(\int_{0}^{t}(t-\varrho)^{2(\beta+\gamma-1)}d\varrho\right)\left(\int_{0}^{t}E\left\|h\big(\varrho,\vartheta^n(\varrho)\big)-h\big(\varrho,\vartheta(\varrho)\big) \right\|^2_Qd\varrho\right)\Bigg].\nonumber
	\end{eqnarray}
	Taking limit as $n \rightarrow \infty,$ we get
	$$E\| \mathcal{F}\vartheta^n-\mathcal{F}\vartheta\|^2 \rightarrow 0 \quad \mbox{as} \quad n \rightarrow \infty,$$
	which implies that $\mathcal{F}$ is continuous on $\Theta_r$ for $t\in J.$ 
	
	\noindent{\bf Step 3:} $\mathcal{F}(\Theta_r)$ is equicontinuous. It is trivial to show that  $\mathcal{F}(\Theta_r)$ is equicontinuous for $t\in (-\infty,0]$. Next, we show that  $\mathcal{F}(\Theta_r)$ is equicontinuous on $J.$ So, for every $\vartheta \in \mathcal{F}(\Theta_r)$ and $t_1, t_2 \in J$ such that $0\leq t_1< t_2\leq\ell,$ we have\\ 
	
	$E\|(\mathcal{F}\vartheta)(t_2)-(\mathcal{F}\vartheta)(t_1) \|^2$
	\begin{eqnarray*} 
		&\leq&15\Bigg[E\|f(t_2,\vartheta_{t_2})-f(t_1,\vartheta_{t_1})\|^2+E\left\|\int_{t_1}^{t_2}\varPsi(t_2-\varrho,\varrho) \mathcal{U}(\varrho)\phi(0)d\varrho\right\|^2\\
		&&\quad+E\left\|\int_{0}^{t_1}\Big[\varPsi(t_2-\varrho,\varrho)-\varPsi(t_1-\varrho,\varrho)\Big]\mathcal{U}(\varrho)\phi(0)d\varrho\right\|^2\\
		&&\quad+E\bigg\|\int_{t_1}^{t_2}\varPsi(t_2-\varrho,\varrho){B}\upsilon(\varrho)d\varrho\bigg\|^2+E\bigg\|\int_{0}^{t_1}\Big[\varPsi(t_2-\varrho,\varrho)-\varPsi(t_1-\varrho,\varrho)\Big]{B}\upsilon(\varrho)d\varrho\bigg\|^2\\	
		&&\quad+E\bigg\| \int_{t_1}^{t_2} \varPsi(t_2-\varrho,\varrho)g\left(\varrho,\vartheta_{\varrho},\vartheta(\varrho)\right)d\varrho\bigg\|^2\\
		&&\quad+E\left\| \int_{0}^{t_1}\Big[\varPsi(t_2-\varrho,\varrho)-\varPsi(t_1-\varrho,\varrho)\Big] g\left(\varrho,\vartheta_{\varrho},\vartheta(\varrho)\right)d\varrho\right\|^2\\
		&&\quad+E\bigg\| \int_{t_1}^{t_2} \varPsi(t_2-\varrho,\varrho)h\left(\varrho,\vartheta(\varrho)\right)dw(\varrho)\bigg\|^2\\
		&&\quad+E\bigg\| \int_{0}^{t_1}\Big[ \varPsi(t_2-\varrho,\varrho)-\varPsi(t_1-\varrho,\varrho)\Big]h\left(\varrho,\vartheta(\varrho)\right)dw(\varrho)\bigg\|^2\\
		&&\quad+E\bigg\|\int_{t_1}^{t_2}  \int_{0}^{\varrho}\varPsi(t_2-\varrho,\varrho)\varPhi(\varrho,s){B}\upsilon(s)dsd\varrho\bigg\|^2\\
		&&\quad+E\bigg\|\int_{0}^{t_1}  \int_{0}^{\varrho}\Big[\varPsi(t_2-\varrho,\varrho)-\varPsi(t_1-\varrho,\varrho)\Big]\varPhi(\varrho,s){B}\upsilon(s)dsd\varrho\bigg\|^2\\
		&&\quad+E\bigg\|\int_{t_1}^{t_2}  \int_{0}^{\varrho}\varPsi(t_2-\varrho,\varrho)\varPhi(\varrho,s)g\left(s,\vartheta_s,\vartheta(s)\right)dsd\varrho\bigg\|^2 \\ 
		&&\quad+E\bigg\|\int_{0}^{t_1}  \int_{0}^{\varrho}\Big[\varPsi(t_2-\varrho,\varrho)-\varPsi(t_1-\varrho,\varrho)\Big]\varPhi(\varrho,s)g\left(s,\vartheta_s,\vartheta(s)\right)dsd\varrho\bigg\|^2 \\ 
		&&\quad+E\bigg\|\int_{t_1}^{t_2}  \int_{0}^{\varrho}\varPsi(t_2-\varrho,\varrho)\varPhi(\varrho,s)h\left(s,\vartheta(s)\right)dw(s)d\varrho\bigg\|^2\\
		&&\quad+E\bigg\|\int_{0}^{t_1}  \int_{0}^{\varrho}\Big[\varPsi(t_2-\varrho,\varrho)-\varPsi(t_1-\varrho,\varrho)\Big]\varPhi(\varrho,s)h\left(s,\vartheta(s)\right)dw(s)d\varrho\bigg\|^2\Bigg] \\ 
		&\leq&\sum\limits_{j=1}^{15}I_j,   
	\end{eqnarray*}
	where
	\begin{eqnarray*} 
		I_1&=&E\|f(t_2,\vartheta_{t_2})-f(t_1,\vartheta_{t_1})\|^2,~I_2=\left(\int_{t_1}^{t_2}E\left\|\varPsi(t_2-\varrho,\varrho) \mathcal{U}(\varrho)\phi(0)\right\|d\varrho\right)^2,\\
		I_3&=&\left(\int_{0}^{t_1}E\left\|\Big[\varPsi(t_2-\varrho,\varrho)-\varPsi(t_1-\varrho,\varrho)\Big]\mathcal{U}(\varrho)\phi(0)\right\|d\varrho\right)^2,\\
		I_4&=&\left(\int_{t_1}^{t_2}E\left\|\varPsi(t_2-\varrho,\varrho){B}\upsilon(\varrho)\right\|d\varrho\right)^2,\\
		I_5&=&\left(\int_{0}^{t_1}E\left\|\Big[\varPsi(t_2-\varrho,\varrho)-\varPsi(t_1-\varrho,\varrho)\Big]{B}\upsilon(\varrho)\right\|d\varrho\right)^2,\\	
		I_6&=&\left(\int_{t_1}^{t_2}E\left\|  \varPsi(t_2-\varrho,\varrho)g\left(\varrho,\vartheta_{\varrho},\vartheta(\varrho)\right)\right\|d\varrho\right)^2,\\
		I_7&=&\left( \int_{0}^{t_1}E\left\|\Big[\varPsi(t_2-\varrho,\varrho)-\varPsi(t_1-\varrho,\varrho)\Big] g\left(\varrho,\vartheta_{\varrho},\vartheta(\varrho)\right)\right\|d\varrho\right)^2,\\
		I_8&=&Tr(Q)\left(\int_{t_1}^{t_2}E\left\| \varPsi(t_2-\varrho,\varrho)h\left(\varrho,\vartheta(\varrho)\right)\right\|d\varrho\right)^2,\\
		I_9&=&Tr(Q)\left(\int_{0}^{t_1}E\left\|\Big[ \varPsi(t_2-\varrho,\varrho)-\varPsi(t_1-\varrho,\varrho)\Big]h\left(\varrho,\vartheta(\varrho)\right)\right\|d\varrho\right)^2,\\
		I_{10}&=&\left(\int_{t_1}^{t_2}  \int_{0}^{\varrho}E\left\|\varPsi(t_2-\varrho,\varrho)\varPhi(\varrho,s){B}\upsilon(s)\right\|dsd\varrho\right)^2,\\
		I_{11}&=&\left(\int_{0}^{t_1}  \int_{0}^{\varrho}E\left\|\Big[\varPsi(t_2-\varrho,\varrho)-\varPsi(t_1-\varrho,\varrho)\Big]\varPhi(\varrho,s){B}\upsilon(s)\right\|dsd\varrho\right)^2,\\
		I_{12}&=&\left(\int_{t_1}^{t_2}  \int_{0}^{\varrho}E\left\|\varPsi(t_2-\varrho,\varrho)\varPhi(\varrho,s)g\left(s,\vartheta_s,\vartheta(s)\right)\right\|dsd\varrho\right)^2, \\ 
		I_{13}&=&\left(\int_{0}^{t_1}  \int_{0}^{\varrho}E\left\|\Big[\varPsi(t_2-\varrho,\varrho)-\varPsi(t_1-\varrho,\varrho)\Big]\varPhi(\varrho,s)g\left(s,\vartheta_s,\vartheta(s)\right)\right\|dsd\varrho\right)^2, \\ 
		I_{14}&=&Tr(Q)\left(\int_{t_1}^{t_2}  \int_{0}^{\varrho}E\left\|\varPsi(t_2-\varrho,\varrho)\varPhi(\varrho,s)h\left(s,\vartheta(s)\right)\right\|dsd\varrho\right)^2,\\
		I_{15}&=&Tr(Q)\left(\int_{0}^{t_1}  \int_{0}^{\varrho}E\left\|\Big[\varPsi(t_2-\varrho,\varrho)-\varPsi(t_1-\varrho,\varrho)\Big]\varPhi(\varrho,s)h\left(s,\vartheta(s)\right)\right\|dsd\varrho\right)^2.  
	\end{eqnarray*}
	Therefore, we only need to prove that $I_j\rightarrow0$ independent of $\vartheta\in \Theta_r$ as $t_1-t_2\rightarrow0$ for $j=1,2,\dots,15.$ 
	
	\noindent For $I_1$, using the assumption (C3), we have 
	\begin{eqnarray*}
		I_1&\leq&L_1(|t_2-t_1|+E\|\vartheta_{t_2}-\vartheta_{t_1}\|^2_{\wp})~\rightarrow~ 0~ \mbox{as }~ t_1\rightarrow t_2.
	\end{eqnarray*}
	For $I_2$, using the Lemma~\ref{l3}, we have 
	\begin{eqnarray*}
		I_2&\leq&C^4E\left\|\phi(0)\right\|^2\left(\int_{t_1}^{t_2}(t_2-\varrho)^{\gamma-1}(1+\varrho^{\beta})d\varrho\right)^2~\rightarrow~0~ \mbox{as }~ t_1\rightarrow t_2.
	\end{eqnarray*}
	For $t_1=0$ and $0 <t_2\leq\ell$, we can easily show that $I_3=0$. For $t_1>0$ and $\epsilon>0$, by Lemma \ref{l3} and using the continuity of the operator $\varPsi(t-\varrho,\varrho)$ in the uniform topology with respect to the variables $t$ and $\varrho$, where $0 \leq t\leq\ell$ and $0\leq \varrho\leq t-\epsilon$, we have 
	\begin{eqnarray*}
		I_3&\leq&\sup\limits_{\varrho\in [0,t_1-\epsilon]}\left\|\varPsi(t_2-\varrho,\varrho)-\varPsi(t_1-\varrho,\varrho)\right\|^2C^2E\|\phi(0)\|^2\left(\int_{0}^{t_1-\epsilon}(1+\varrho^{\beta})d\varrho \right)^2\\
		&&\quad+C^4E\|\phi(0)\|^2\left(\int_{t_1-\epsilon}^{t_1}\left[(t_2-\varrho)^{\gamma-1}+(t_1-\varrho)^{\gamma-1}\right](1+\varrho^{\beta})d\varrho\right)^2\\
		&\rightarrow& 0~ \mbox{as }~ t_1\rightarrow t_2 \mbox{ and } \epsilon \rightarrow 0.
	\end{eqnarray*}
	For $I_4$, using the assumption (C1), Lemma \ref{l3}, and H\"{o}lder inequality, we have
	\begin{eqnarray*}
		I_4&\leq&C^2m_1^2\left(\int_{t_1}^{t_2}(t_2-\varrho)^{\gamma-1}E\|\upsilon(\varrho)\|d\varrho\right)^2\nonumber\\&\leq&m_1^2C^2\left(\int_{t_1}^{t_2}(t_2-\varrho)^{2(\gamma-1)}d\varrho\right)\left(\int_{t_1}^{t_2}E\|\upsilon(\varrho)\|^2d\varrho\right)\\
		&\leq&\frac{m_1^2C^2}{2\gamma-1}\left(l_1+l_2\|\mathcal{G}_r\|_{{L}^{\frac{1}{2\gamma_1}}[0,\ell]}+l_3\left(1+E\|\vartheta\|^2\right)\right)(t_2-t_1)^{2\gamma}\\
		&\rightarrow& 0~ \mbox{as }~ t_1-t_2\rightarrow0.
	\end{eqnarray*}
	For $t_1=0$ and $0 <t_2\leq\ell$, we can easily show that $I_5=0$. For $t_1>0$ and $\epsilon>0$, by Lemma \ref{l3}, assumption (C1), and using the continuity of the operator $\varPsi(t-\varrho,\varrho)$ in the uniform topology with respect to the variables $t$ and $\varrho$, where $0 \leq t\leq\ell$ and $0\leq \varrho\leq t-\epsilon$, we have 
	\begin{eqnarray*}
		I_5&\leq&\sup\limits_{\varrho\in [0,t_1-\epsilon]}\left\|\varPsi(t_2-\varrho,\varrho)-\varPsi(t_1-\varrho,\varrho)\right\|^2 m_1^2 \left(\int_{0}^{t_1-\epsilon}E\|\upsilon(\varrho)\|d\varrho \right)^2\\
		&&\quad+C^2\left(\int_{t_1-\epsilon}^{t_1}\left[(t_2-\varrho)^{\gamma-1}+(t_1-\varrho)^{\gamma-1}\right]E\|\upsilon(\varrho)\|d\varrho\right)^2\\
		&\rightarrow& 0~ \mbox{as }~ t_1\rightarrow t_2 \mbox{ and } \epsilon \rightarrow 0.
	\end{eqnarray*}
	For $I_6$, using the assumption (C4), Lemma~\ref{l3}, and H\"{o}lder inequality, we find that
	\begin{eqnarray*}
		I_6&\leq&C^2\left(\int_{t_1}^{t_2}(t_2-\varrho)^{\gamma-1}\left(E\left\|g\left(\varrho,\vartheta_{\varrho},\vartheta(\varrho)\right)\right\|^2\right)^{\frac{1}{2}}d\varrho\right)^{2}\nonumber\\
		&\leq&C^2\left(\int_{t_1}^{t_2}(t_2-\varrho)^{\gamma-1}\left(\mathcal{G}_r(\varrho)\right)^{\frac{1}{2}}d\varrho\right)^{2}\nonumber\\
		&\leq&C^2\left( \int_{t_1}^{t_2} (t_2-\varrho)^{\frac{\gamma-1}{1-\gamma_1}}d\varrho \right)^{2(1-\gamma_1)} \left(\int_{t_1}^{t_2}\mathcal{G}^{\frac{1}{2\gamma_1}}_r(\varrho)d\varrho\right)^{2\gamma_1} \\
		&\leq&C^2\left(\frac{1-\gamma_1}{\gamma-\gamma_1}\right)^{2(1-\gamma_1)}{(t_2-t_1)}^{2(\gamma-\gamma_1)}\|\mathcal{G}_r\|_{{L}^{\frac{1}{2\gamma_1}}[0,\ell]}\\
		&\rightarrow& 0~ \mbox{as }~ t_2-t_1\rightarrow0.
	\end{eqnarray*}
	For $t_1=0$ and $0 <t_2\leq\ell$, we can easily show that $I_7=0$. For $t_1>0$ and $\epsilon>0$, by Lemma \ref{l3}, assumption (C4), and the continuity of the operator $\varPsi(t-\varrho,\varrho)$ in uniform topology with respect to the variables $t$ and $\varrho$, where $0 \leq t\leq\ell$ and $0\leq \varrho\leq t-\epsilon$, we get
	\begin{eqnarray*}
		I_7&\leq&\sup\limits_{\varrho\in [0,t_1-\epsilon]}\left\|\varPsi(t_2-\varrho,\varrho)-\varPsi(t_1-\varrho,\varrho)\right\|^2 \left(\int_{0}^{t_1-\epsilon}\left(E\left\|g\left(\varrho,\vartheta_{\varrho},\vartheta(\varrho)\right)\right\|^2\right)^{\frac{1}{2}}d\varrho \right)^2\\
		&&\quad+C^2\left(\int_{t_1-\epsilon}^{t_1}\left[(t_2-\varrho)^{\gamma-1}+(t_1-\varrho)^{\gamma-1}\right]\left(E\left\|g\left(\varrho,\vartheta_{\varrho},\vartheta(\varrho)\right)\right\|^2\right)^{\frac{1}{2}}d\varrho\right)^2\\
		&\rightarrow& 0~ \mbox{as }~ t_1\rightarrow t_2 \mbox{ and } \epsilon \rightarrow 0.
	\end{eqnarray*}
	For $I_8$, by using the assumption (C5), Lemma \ref{l3}, and Cauchy-Schwarz inequality, we find that
	\begin{eqnarray*}
		I_8&\leq&C^2Tr(Q)\left(\int_{t_1}^{t_2}(t_2-\varrho)^{\gamma-1}E\left\|h\left(\varrho,\vartheta(\varrho)\right)\right\|_Qd\varrho\right)^{2}\nonumber\\
		&\leq&C^2Tr(Q)\left(\int_{t_1}^{t_2}(t_2-\varrho)^{\frac{\gamma-1}{2}} \cdot(t_2-\varrho)^{\frac{\gamma-1}{2}}E\left\|h\left(\varrho,\vartheta(\varrho)\right)\right\|_Qd\varrho\right)^2\nonumber\\
		&\leq&C^2Tr(Q)\left(\int_{t_1}^{t_2}(t_2-\varrho)^{\gamma-1}d\varrho\right)\left(\int_{t_1}^{t_2}(t_2-\varrho)^{\gamma-1}E\left\|h\left(\varrho,\vartheta(\varrho)\right)\right\|_Q^2d\varrho\right)\\
		&\leq&C^2Tr(Q)\tilde{L}_h\frac{(t_2-t_1)^{2\gamma}}{\gamma^2}(1+E\|\vartheta\|^2_Z)\\
		&\rightarrow& 0~ \mbox{as }~ t_2-t_1\rightarrow0.
	\end{eqnarray*}
	For $t_1=0$ and $0 <t_2\leq\ell$, we can easily show that $I_9=0$. For $t_1>0$ and $\epsilon>0$, by Lemma \ref{l3}, assumption (C5), and using the continuity of operator $\varPsi(t-\varrho,\varrho)$ in uniform topology with respect to the variables $t$ and $\varrho$, where $0 \leq t\leq\ell$ and $0\leq \varrho\leq t-\epsilon$, we have 
	\begin{eqnarray*}
		I_9&\leq&\sup\limits_{\varrho\in [0,t_1-\epsilon]}\left\|\varPsi(t_2-\varrho,\varrho)-\varPsi(t_1-\varrho,\varrho)\right\|^2Tr(Q) \left(\int_{0}^{t_1-\epsilon}E\left\|h\left(\varrho,\vartheta(\varrho)\right)\right\|_Qd\varrho \right)^2\\
		&&\quad+C^2Tr(Q)\left(\int_{t_1-\epsilon}^{t_1}\left[(t_2-\varrho)^{\gamma-1}+(t_1-\varrho)^{\gamma-1}\right]E\left\|h\left(\varrho,\vartheta(\varrho)\right)\right\|_Qd\varrho\right)^2\\
		&\rightarrow& 0~ \mbox{as }~ t_2\rightarrow t_1 \mbox{ and } \epsilon \rightarrow 0.
	\end{eqnarray*}
	For $I_{10}$, by using the assumption (C1), Lemma~\ref{l3}, and since the function $\varrho\rightarrow (t_2-\varrho)^{\gamma-1}I^{\beta}_{\varrho}E\|\upsilon(\varrho)\|$ is Lebesgue integrable, we have
	\begin{eqnarray*}
		I_{10}&\leq&C^4m_1^2\left(\int_{t_1}^{t_2}\int_{0}^{\varrho}(t_2-\varrho)^{\gamma-1}(\varrho-s)^{\beta-1}E\|\upsilon(s)\|dsd\varrho \right)^2\nonumber\\
		&\leq&C^4m_1^2(\Gamma(\beta))^2\left(\int_{t_1}^{t_2}(t_2-\varrho)^{\gamma-1}I^{\beta}_{\varrho}E\|\upsilon(\varrho)\|d\varrho \right)^2\nonumber\\
		&\rightarrow& 0~ \mbox{as }~ t_2-t_1\rightarrow0.
	\end{eqnarray*}
	For $t_1=0$ and $0 <t_2\leq\ell$, we can easily show that $I_{11}=0$. For $t_1>0$ and $\epsilon>0$, using the assumption $(C1)$, Lemma \ref{l3}, and since the functions $\varrho\rightarrow (t_1-\varrho)^{\gamma-1}I^{\beta}_{\varrho}E\|\upsilon(\varrho)\|$  and $\varrho\rightarrow (t_2-\varrho)^{\gamma-1}I^{\beta}_{\varrho}E\|\upsilon(\varrho)\|$ are Lebesgue integrable, as well as the continuity of the operator $\varPsi(t-\varrho,\varrho)$ in the uniform topology with respect to the variables $t$ and $\varrho$, where $0 \leq t\leq\ell$ and $0\leq \varrho\leq t-\epsilon$, we have 
	\begin{eqnarray*}
		I_{11}&\leq&\sup\limits_{\varrho\in [0,t_1-\epsilon]}\left\|\varPsi(t_2-\varrho,\varrho)-\varPsi(t_1-\varrho,\varrho)\right\|^2 C^2 \left(\int_{0}^{t_1-\epsilon}\int_{0}^{\varrho}(\varrho-s)^{\beta-1}E\left\|\upsilon(s)\right\|_Qdsd\varrho \right)^2\\
		&&\quad+C^4\left(\int_{t_1-\epsilon}^{t_1}\int_{0}^{\varrho}\left[(t_2-\varrho)^{\gamma-1}+(t_1-\varrho)^{\gamma-1}\right](\varrho-s)^{\beta-1}E\left\|\upsilon(\varrho)\right\|_Qdsd\varrho\right)^2\\
		&\leq&\sup\limits_{\varrho\in [0,t_1-\epsilon]}\left\|\varPsi(t_2-\varrho,\varrho)-\varPsi(t_1-\varrho,\varrho)\right\|^2 C^2m_1^2 \left(\int_{0}^{t_1-\epsilon}\int_{0}^{\varrho}(\varrho-s)^{\beta-1}E\left\|\upsilon(s)\right\|_Qdsd\varrho \right)^2\\
		&&\quad+C^4(\Gamma(\beta))^2\left(\int_{t_1-\epsilon}^{t_1}\left[(t_2-\varrho)^{\gamma-1}I^{\beta}_{\varrho}E\|\upsilon(\varrho)\|+(t_1-\varrho)^{\gamma-1}I^{\beta}_{\varrho}E\|\upsilon(\varrho)\|\right]d\varrho\right)^2\\
		&\rightarrow& 0~ \mbox{as }~ t_1\rightarrow t_2 \mbox{ and } \epsilon \rightarrow 0.
	\end{eqnarray*}
	For $I_{12}$, by using the assumption (C4), Lemma~\ref{l3} and since the function $\varrho\rightarrow (t_2-\varrho)^{\gamma-1}I^{\beta}_{\varrho}\mathcal{G}_r(\varrho)$ is Lebesgue integrable, we find that
	\begin{eqnarray*}
		I_{12}&\leq&C^4\left(\int_{t_1}^{t_2}\int_{0}^{\varrho}(t_2-\varrho)^{\gamma-1}(\varrho-s)^{\beta-1}\left(E\left\|g\left(s,\vartheta_{s},\vartheta(s)\right)\right\|^2\right)^{\frac{1}{2}}dsd\varrho \right)^2\nonumber\\
		&\leq&C^4\left(\int_{t_1}^{t_2}\int_{0}^{\varrho}(t_2-\varrho)^{\gamma-1}(\varrho-s)^{\beta-1}\left(\mathcal{G}_r(s)\right)^{\frac{1}{2}}dsd\varrho \right)^2\nonumber\\
		&\leq&C^4(\Gamma(\beta))^2\left(\int_{t_1}^{t_2}(t_2-\varrho)^{\gamma-1}I^{\beta}_{\varrho}\left(\mathcal{G}_r(\varrho)\right)^{\frac{1}{2}}d\varrho \right)^2\nonumber\\
		&\rightarrow& 0~ \mbox{as }~ t_2-t_1\rightarrow0.
	\end{eqnarray*}
	For $t_1=0$ and $0 <t_2\leq\ell$, we can easily prove that $I_{13}=0$. For $t_1>0$ and $\epsilon>0$, by using the assumption (C4), Lemma \ref{l3}, and since the functions $\varrho\rightarrow (t_1-\varrho)^{\gamma-1}I^{\beta}_{\varrho}\mathcal{G}_r(\varrho)$ and $\varrho\rightarrow (t_2-\varrho)^{\gamma-1}I^{\beta}_{\varrho}\mathcal{G}_r(\varrho)$ are Lebesgue integrable, as well as the continuity of the operator $\varPsi(t-\varrho,\varrho)$ in uniform topology about the variables $t$ and $\varrho$, where $0 \leq t\leq\ell$ and $0\leq \varrho\leq t-\epsilon$, we get 
	\begin{eqnarray*}
		I_{13}&\leq&\sup\limits_{\varrho\in [0,t_1-\epsilon]}\left\|\varPsi(t_2-\varrho,\varrho)-\varPsi(t_1-\varrho,\varrho)\right\|^2 C^2 \left(\int_{0}^{t_1-\epsilon}\int_{0}^{\varrho}(\varrho-s)^{\beta-1}\left(\mathcal{G}_r(s)\right)^{\frac{1}{2}}dsd\varrho \right)^2\\
		&&\quad+C^4\left(\int_{t_1-\epsilon}^{t_1}\int_{0}^{\varrho}\left[(t_2-\varrho)^{\gamma-1}+(t_1-\varrho)^{\gamma-1}\right](\varrho-s)^{\beta-1}\left(\mathcal{G}_r(s)\right)^{\frac{1}{2}}dsd\varrho\right)^2\\
		&\leq&\sup\limits_{\varrho\in [0,t_1-\epsilon]}\left\|\varPsi(t_2-\varrho,\varrho)-\varPsi(t_1-\varrho,\varrho)\right\|^2 C^2\left(\int_{0}^{t_1-\epsilon}\int_{0}^{\varrho}(\varrho-s)^{\beta-1}\left(\mathcal{G}_r(s)\right)^{\frac{1}{2}}dsd\varrho \right)^2\\
		&&\quad+C^4(\Gamma(\beta))^2\left(\int_{t_1-\epsilon}^{t_1}\left[(t_2-\varrho)^{\gamma-1}I^{\beta}_{\varrho}\left(\mathcal{G}_r(\varrho)\right)^{\frac{1}{2}}+(t_1-\varrho)^{\gamma-1}I^{\beta}_{\varrho}\left(\mathcal{G}_r(\varrho)\right)^{\frac{1}{2}}\right]d\varrho\right)^2\\
		&\rightarrow& 0~ \mbox{as }~ t_1\rightarrow t_2 \mbox{ and } \epsilon \rightarrow 0.
	\end{eqnarray*}
	For $I_{14}$, using the assumption $(C5)$, Lemma \ref{l3} and since the function $\varrho\rightarrow (t_2-\varrho)^{\gamma-1}I^{\beta}_{\varrho}E\|h(\varrho,\vartheta(\varrho))\|$ is Lebesgue integrable, we get
	\begin{eqnarray*}
		I_{14}&\leq&C^4Tr(Q)\left(\int_{t_1}^{t_2}\int_{0}^{\varrho}(t_2-\varrho)^{\gamma-1}(\varrho-s)^{\beta-1}E\|h(s,\vartheta(s))\|dsd\varrho \right)^2\nonumber\\
		&\leq&C^4(\Gamma(\beta))^2\left(\int_{t_1}^{t_2}(t_2-\varrho)^{\gamma-1}I^{\beta}_{\varrho}E\|h(\varrho,\vartheta(\varrho))\|d\varrho \right)^2\nonumber\\
		&\rightarrow& 0~ \mbox{as }~ t_2-t_1\rightarrow0.
	\end{eqnarray*}
	For $t_1=0$ and $0 <t_2\leq\ell$, we can easily prove that $I_{15}=0$. For $t_1>0$ and $\epsilon>0$, by  assumption (C5), Lemma \ref{l3}, and since the functions $\varrho\rightarrow (t_1-\varrho)^{\gamma-1}I^{\beta}_{\varrho}E\|h(\varrho,\vartheta(\varrho))\|$ and $\varrho\rightarrow (t_2-\varrho)^{\gamma-1}I^{\beta}_{\varrho}E\|h(\varrho,\vartheta(\varrho))\|$ are Lebesgue integrable, as well as the continuity of operator $\varPsi(t-\varrho,\varrho)$ in uniform topology with respect to the variables $t$ and $\varrho$, where $0 \leq t\leq\ell$ and $0\leq \varrho\leq t-\epsilon$, we have 
	\begin{eqnarray*}
		I_{15}&\leq&\sup\limits_{\varrho\in [0,t_1-\epsilon]}\left\|\varPsi(t_2-\varrho,\varrho)-\varPsi(t_1-\varrho,\varrho)\right\|^2 C^2 \left(\int_{0}^{t_1-\epsilon}\int_{0}^{\varrho}(\varrho-s)^{\beta-1}E\|h(s,\vartheta(s))\|dsd\varrho \right)^2\\
		&&\quad+C^4\left(\int_{t_1-\epsilon}^{t_1}\int_{0}^{\varrho}\left[(t_2-\varrho)^{\gamma-1}+(t_1-\varrho)^{\gamma-1}\right](\varrho-s)^{\beta-1}E\|h(s,\vartheta(s))\|dsd\varrho\right)^2\\
		&\leq&\sup\limits_{\varrho\in [0,t_1-\epsilon]}\left\|\varPsi(t_2-\varrho,\varrho)-\varPsi(t_1-\varrho,\varrho)\right\|^2 C^2\left(\int_{0}^{t_1-\epsilon}\int_{0}^{\varrho}(\varrho-s)^{\beta-1}E\|h(s,\vartheta(s))\|dsd\varrho \right)^2\\
		&&\quad+C^4(\Gamma(\beta))^2\left(\int_{t_1-\epsilon}^{t_1}\left[(t_2-\varrho)^{\gamma-1}I^{\beta}_{\varrho}E\|h(\varrho,\vartheta(\varrho))\|+(t_1-\varrho)^{\gamma-1}I^{\beta}_{\varrho}E\|h(\varrho,\vartheta(\varrho))\|\right]d\varrho\right)^2\\
		&\rightarrow& 0~ \mbox{as }~ t_1\rightarrow t_2 \mbox{ and } \epsilon \rightarrow 0.
	\end{eqnarray*}
	Hence $E\|(\mathcal{F}\vartheta)(t_2)-(\mathcal{F}\vartheta)(t_1) \|^2 \rightarrow 0$ as $t_1 \rightarrow t_2.$ Therefore, $\mathcal{F}(\Theta_r)$ is equicontinuous. The proof of the equicontinuity for the cases $t_1<t_2<0$ and $t_1<0<t_2$  is omitted as they are both quite straightforward. As a result, $\mathcal{F}(\Theta_r)$ is equicontinuous.                    
	
	\noindent{\bf Step 5:}  Let $\mathcal{Q} = \overline{conv}\mathcal{F}(\Theta_r)$, where $\overline{conv}$ means the closure of convex hull. We can easily verify that the function $\mathcal{F}$ maps $\mathcal{Q}$ into itself and the set $\mathcal{Q}\subset C(J,X)$ is equicontinuous. Now, we can prove that function $\mathcal{F}: \mathcal{Q}\rightarrow \mathcal{Q}$ is a $k$-set-contractive operator. From Lemma \ref{l8}, we know that for each subset $\mathcal{P}$ of $\mathcal{Q}$, there exists a countable subset $\mathcal{P}_0=\{\vartheta^n\}$ of $\mathcal{P}$ such that
	\begin{eqnarray}\label{3.10}
		\mu_C(\mathcal{F}(\mathcal{P})) \leq 2\mu_C(\mathcal{F}(\mathcal{P}_0).
	\end{eqnarray}
	The set $\mathcal{P}_0 \subset \mathcal{Q}$ is also equicontinuous as the set $\mathcal{Q}$ is  equicontinuous. Therefore by (\ref{3.7}), Lemmas \ref{l3},~\ref{l5},~\ref{l6},~\ref{l9}, and the assumptions (C3)-(C5), we have	
	\begin{eqnarray}\label{3.11}
		\mu\left((\mathcal{F}\{\vartheta^n(t)\})\right)&=&	\mu\left({A}^{-1}(0)\phi(0)-f(0,{A}^{-1}(0)\phi(0))+f(t,\{\vartheta^n_t(t)\})\right)\nonumber\\
		&&\quad+\mu\left(\displaystyle\int_{0}^{t}\varPsi(t-\varrho,\varrho)\mathcal{U}(\varrho)\phi(0)d\varrho\right)+\mu\left(\displaystyle\int_{0}^{t}\varPsi(t-\varrho,\varrho){B}\upsilon(\varrho)d\varrho \right)\nonumber\\
		&&\quad+ \mu\left(\displaystyle\int_{0}^{t} \varPsi(t-\varrho,\varrho)g\left(\varrho,\{\vartheta^n_{\varrho}(t)\},\{\vartheta^n(\varrho)\}\right)d\varrho\right) \nonumber  \\
		&&\quad+\mu\left(\displaystyle\int_{0}^{t}\varPsi(t-\varrho,\varrho)h(\varrho,\{\vartheta^n(\varrho)\})dw(\varrho)\right)\nonumber\\
		&&\quad+\mu\left(\displaystyle\int_{0}^{t}  \int_{0}^{\varrho}\varPsi(t-\varrho,\varrho)\varPhi(\varrho,s){B}\upsilon(s)dsd\varrho\right)\nonumber\\
		&&\quad+\mu\left(\displaystyle\int_{0}^{t}  \int_{0}^{\varrho}\varPsi(t-\varrho,\varrho)\varPhi(\varrho,s)g\left(s,\{\vartheta^n_s(t)\},\{\vartheta^n(s)\}\right)dsd\varrho\right) \nonumber \\ 
		&&\quad+\mu\left(\displaystyle\int_{0}^{t}  \int_{0}^{\varrho}\varPsi(t-\varrho,\varrho)\varPhi(\varrho,s)h(s,\{\vartheta^n(s)\})dw(s)d\varrho\right)\nonumber\\
		&\leq&\hat{L}_f\mu_C(\mathcal{P})+2m_1C\int_{0}^{t}(t-\varrho)^{\gamma-1}\mu_C(\upsilon(\varrho))d\varrho+2C\hat{L}_g\int_{0}^{t}(t-\varrho)^{\gamma-1}\mu_C(\mathcal{P})d\varrho\nonumber\\
		&&\quad+C\hat{L}_h\sqrt{\ell Tr(Q)}\int_{0}^{t}(t-\varrho)^{\gamma-1}\mu_C(\mathcal{P})d\varrho\nonumber\\
		&&\quad+4m_1C^2\displaystyle\int_{0}^{t}  \int_{0}^{\varrho}(t-\varrho)^{\gamma-1}(\varrho-s)^{\beta-1}\mu_C(\upsilon(s))dsd\varrho\nonumber\\
		&&\quad+4C^2\hat{L}_g\displaystyle\int_{0}^{t}  \int_{0}^{\varrho}(t-\varrho)^{\gamma-1}(\varrho-s)^{\beta-1}\mu_C(\mathcal{P})dsd\varrho\nonumber\\
		&&\quad+2C^2\hat{L}_h\sqrt{\ell Tr(Q)}\displaystyle\int_{0}^{t}  \int_{0}^{\varrho}(t-\varrho)^{\gamma-1}(\varrho-s)^{\beta-1}\mu_C(\mathcal{P})dsd\varrho. 
	\end{eqnarray}
	From (\ref{3.1}), we have 
	\begin{eqnarray}\label{3.12}
		\mu(\upsilon(t))&\leq&m_2\mu\Bigg[E\vartheta_1  + \int_{0}^{\ell}\kappa(s)dw(s) -{A}^{-1}(0)\phi(0)+f(0,{A}^{-1}(0)\phi(0))-f(t,\{\vartheta_t^n\})\nonumber \\
		&&\hspace{1cm}-\int_{0}^{t}\varPsi(t-\varrho,\varrho) \mathcal{U}(\varrho)\phi(0)d\varrho - \int_{0}^{t} \varPsi(t-\varrho,\varrho)g\left(\varrho,\{\vartheta^n_{\varrho}(t)\},\{\vartheta^n(\varrho)\}\right)d\varrho \nonumber \\
		&&\hspace{1cm}-\int_{0}^{t} \varPsi(t-\varrho,\varrho)h\left(\varrho,\{\vartheta^n(\varrho)\}\right)dw(\varrho)\nonumber\\
		&&\hspace{1cm}-\int_{0}^{t}  \int_{0}^{\varrho}\varPsi(t-\varrho,\varrho)\varPhi(\varrho,s)g\left(s,\{\vartheta^n_{s}(t)\},\{\vartheta^n(s)\}\right)dsd\varrho\nonumber \\
		&&\hspace{3cm}-\int_{0}^{t}  \int_{0}^{\varrho}\varPsi(t-\varrho,\varrho)\varPhi(\varrho,s)h\left(s,\{\vartheta^n(s)\}\right)dw(s)d\varrho \Bigg]\nonumber\\
		&\leq&m_2\Bigg[2C\hat{L}_g\int_{0}^{t}(t-\varrho)^{\gamma-1}\mu_C(\mathcal{P})d\varrho+C\hat{L}_h\sqrt{\ell Tr(Q)}\int_{0}^{t}(t-\varrho)^{\gamma-1}\mu_C(\mathcal{P})d\varrho\nonumber\\
		&&\quad+4C^2\hat{L}_g\displaystyle\int_{0}^{t}  \int_{0}^{\varrho}(t-\varrho)^{\gamma-1}(\varrho-s)^{\beta-1}\mu_C(\mathcal{P})dsd\varrho\nonumber\\
		&&\quad+2C^2\hat{L}_h\sqrt{\ell Tr(Q)}\displaystyle\int_{0}^{t}  \int_{0}^{\varrho}(t-\varrho)^{\gamma-1}(\varrho-s)^{\beta-1}\mu_C(\mathcal{P})dsd\varrho\Bigg]\nonumber\\
		&\leq&m_2\bigg[\left(2\hat{L}_g+\hat{L}_h\sqrt{\ell Tr(Q)}\right)\frac{C\ell^{\gamma}}{\gamma}+2\left(2\hat{L}_g+\hat{L}_h\sqrt{\ell Tr(Q)}\right)\frac{C^2\mathbf{B}(\gamma,\beta)\ell^{\beta+\gamma}}{\beta+\gamma}\bigg]\mu_C(\mathcal{P})\nonumber\\
		&=&m_2C{\ell}^{\gamma}\left(\frac{1}{\gamma}+\frac{2C\mathbf{B}(\gamma,\beta)}{\beta+\gamma} {\ell}^{\beta}\right)\left(2\hat{L}_g+\hat{L}_h\sqrt{\ell Tr(Q)}\right)\mu_C(\mathcal{P})\nonumber\\
		&=&m_2C{\ell}^{\gamma}\lambda_5\left(2\hat{L}_g+\hat{L}_h\sqrt{\ell Tr(Q)}\right)\mu_C(\mathcal{P}).  
	\end{eqnarray}
	From (\ref{3.11}), (\ref{3.12}) and Lemma~\ref{l5}, we have
	\begin{eqnarray}\label{3.13}
		\mu\left((\mathcal{F}\{\vartheta^n(t)\})\right)
		&\leq&\hat{L}_f\mu_C(\mathcal{P})+2m_1\frac{C\ell^{\gamma}}{\gamma}\times m_2C{\ell}^{\gamma}\lambda_5\left(2\hat{L}_g+\hat{L}_h\sqrt{\ell Tr(Q)}\right)\mu_C(\mathcal{P})\nonumber\\
		&&\quad+2\hat{L}_g\frac{C\ell^{\gamma}}{\gamma}\mu_C(\mathcal{P})+\hat{L}_h\sqrt{\ell Tr(Q)}\frac{C\ell^{\gamma}}{\gamma}\mu_C(\mathcal{P})\nonumber\\
		&&\quad+4m_1C^2\mathbf{B}(\gamma,\beta)\displaystyle\int_{0}^{t}(t-\varrho)^{\beta+\gamma-1}\mu_C(\upsilon(\varrho))d\varrho\nonumber\\
		&&\quad+4C^2\mathbf{B}(\gamma,\beta)\hat{L}_g\displaystyle\int_{0}^{t} (t-\varrho)^{\beta+\gamma-1}\mu_C(\mathcal{P})d\varrho\nonumber\\
		&&\quad+2C^2\mathbf{B}(\gamma,\beta)\hat{L}_h\sqrt{\ell Tr(Q)}\displaystyle\int_{0}^{t}  (t-\varrho)^{\beta+\gamma-1}\mu_C(\mathcal{P})d\varrho\nonumber\\	&\leq&\hat{L}_f\mu_C(\mathcal{P})+2C^2m_1m_2\frac{\ell^{2\gamma}}{\gamma}\lambda_5\left(2\hat{L}_g+\hat{L}_h\sqrt{\ell Tr(Q)}\right)\mu_C(\mathcal{P})\nonumber\\
		&&\quad+C\left(2\hat{L}_g+\hat{L}_h\sqrt{\ell Tr(Q)}\right)\frac{\ell^{\gamma}}{\gamma}\mu_C(\mathcal{P})\nonumber\\
		&&\quad+4C^2m_1\mathbf{B}\times m_2C{\ell}^{\gamma}\lambda_5\left(2\hat{L}_g+\hat{L}_h\sqrt{\ell Tr(Q)}\right)\frac{\ell^{\beta+\gamma}}{\beta+\gamma}\mu_C(\mathcal{P})	\nonumber\\
		&&\quad+4C^2\mathbf{B}(\gamma,\beta)\hat{L}_g\frac{\ell^{\beta+\gamma}}{\beta+\gamma}\mu_C(\mathcal{P})\nonumber\\
		&&\quad+2C^2\mathbf{B}(\gamma,\beta)\left(2\hat{L}_g+\hat{L}_h\sqrt{\ell Tr(Q)}\right)\frac{\ell^{\beta+\gamma}}{\beta+\gamma}\mu_C(\mathcal{P})\nonumber\\
		&\leq&\bigg[\hat{L}_f+C\ell^{\gamma}\lambda_5(2Cm_1m_2\lambda_5\ell^{\gamma}+1)\left(2\hat{L}_g+\hat{L}_h\sqrt{\ell Tr(Q)}\right)\bigg]\mu_C(\mathcal{P}).
	\end{eqnarray}
	From Lemma \ref{l7} and since the set $\mathcal{F}(\mathcal{P}_0)\subset \mathcal{P} $ is bounded and equicontinuous, we find that 
	\begin{eqnarray}\label{3.14}
		\mu_C(\mathcal{F}(\mathcal{P}_0))=\max\limits_{t\in J}\mu(\mathcal{F}(\mathcal{P}_0))(t).
	\end{eqnarray}
	Therefore, from (\ref{3.10}), (\ref{3.13}) and (\ref{3.14}), we have
	\begin{eqnarray}\label{3.15}
		\mu_C(\mathcal{F}(\mathcal{P}))\leq\bigg[\hat{L}_f+C\ell^{\gamma}\lambda_5(2Cm_1m_2\lambda_5\ell^{\gamma}+1)\left(2\hat{L}_g+\hat{L}_h\sqrt{\ell Tr(Q)}\right)\bigg] \mu_C(\mathcal{P}).
	\end{eqnarray}
	Hence, from (\ref{3.6}), (\ref{3.15}) and Definition \ref{d5}, we conclude that $\mathcal{F}: \mathcal{Q}\rightarrow \mathcal{Q}$ is a $k$-set contractive operator. The operator $\mathcal{F}$ defined by (\ref{3.7}) has at least one fixed point $\vartheta\in \mathcal{Q}$ from Lemma \ref{l11}, which is just a mild solution of time fractional non-autonomous system (\ref{1.1}) on interval $J$. Hence the system (\ref{1.1}) is controllable.
\end{proof}

\section{\textbf{Application}}
\noindent Consider the following non-autonomous fractional partial differential equation	\begin{eqnarray} \label{5.1}
	\left \{ \begin{array}{lll}
		^C\mathrm{D}^{\gamma}_t\left[\vartheta(x,t)-\hat{f}(t,\vartheta(x,t-h))\right]&\\\hspace{1cm}=\kappa(x,t)\Delta \left[\vartheta(x,t)-\hat{f}(t,\vartheta(x,t-h))\right]\vartheta(x,t)+\alpha\upsilon(x,t)
		&\\\hspace{1cm}+\hat{g}(t,\vartheta(x,t-h),\vartheta(x,t))+\hat{h}(t,\vartheta(x,t))\dfrac{dw}{dt}, ~h>0,~ x\in[0,\pi],~t\in [0,1],
		&\\\vartheta(0,t)=0,~\vartheta(\pi,t)=0~ \quad x \in [0, \pi], \\
		\vartheta(x,0)=(\kappa(\cdot,0))^{-1}\phi(x),
	\end{array}\right.
\end{eqnarray}
where $^C\mathrm{D}^{\gamma}_t$ denotes the partial Caputo derivative of order $0<\gamma<1,~J=[0,1],\:\: \Delta$ is the Laplace operator, $\kappa(x,t)$ is continuous function on $[0,\pi]\times[0,1]$  which is the coefficient of heat conductivity and it is also H\"{o}lder continuous in $t,$ that is for any $t_1,~t_2\in J$, there exist constants $0<\alpha\leq1$ and $C>0$ independent of $t_1$  and $t_2,$ such that
$$|\kappa(x,t_1)-\kappa(x,t_2)|\leq C|t_1-t_2|^{\alpha},\quad x \in [0,\pi],$$
and $\phi \in L^2([0,\pi],\mathbb{R}).$\\
Let $X=Y=U=L^2([0,\pi],\mathbb{R})$ be a Banach space defined with the $L^2$ norm $\|\cdot\|_2.$ Define a linear operator $A(t)$ in a Banach space $X$ by $A(t)\vartheta=-\kappa(x,t)\Delta \vartheta$
with the domain 
$D(A)= H^2(0,\pi)\cap H^1_0(0,\pi),$
where $ H^2(0,\pi)$ is the completion of the space $ C^2(0,\pi)$ with the norm 
$$\|u\|_{H^2(0,\pi)}=\bigg(\displaystyle\int_{[0,\pi]}\sum_{|\nu|\leq2}|D^{\nu}u(z)|^2dz\bigg)^{1/2}.$$ 
$C^2(0,\pi)$ is the set of all continuous functions defined on $[0,\pi]$ which have continuous partial derivatives of order less than or equal to $2.$ $H^1_0(0,\pi)$ represents the completion of $C^1(0,\pi)$ with  the norm  $\|u\|_{H^1(0,\pi)}$ and $C^1_0(0,\pi)$
is the set of all functions where $u\in C^1(0,\pi)$ with compact support on the domain $[0,\pi].$ From \cite{p1}, we know that $-A(t)$ generates an analytic semigroup $e^{-sA(t)}$ in $X$ satisfying $(B1)$ and $(B2)$.

\noindent For any $t\in[0,1],$ and $\varphi\in C((-\infty,\ell],X)$, we set $\vartheta(t)(x)=\vartheta(x,t)$ and $\upsilon(t)(x)=\upsilon(x,t),$ then \\$
\varphi(t)(x)=\vartheta_0(x,t),~~ t\in (-\infty,0],~f(t,\varphi)(x)=\hat{f}(t,\varphi(x,t)),~
g(t,\varphi,\vartheta(t))(x)=\hat{g}(t,\varphi(x,t),\vartheta(x,t)),$\\$h(t,\vartheta(t))(x)=\hat{f}(t,\vartheta(x,t)),~A^{-1}(0)=(\kappa(\cdot,0))^{-1}.$\\
Define the bounded linear
operator $B : U \rightarrow X$ by $B\upsilon(t)(x)=\alpha\upsilon (x,t)$.
We assume that the functions $f,~g,$ and $h$  satisfy the required assumptions. 
Then the problem (\ref{5.1}) can be written in an abstract form which satisfies all the assumptions Theorem \ref{t3.2}. Therefore applying Theorem \ref{t3.2}, we can conclude that system (\ref{5.1}) is controllable.

\section{\textbf{Conclusion}}
This paper is concerned with the controllability of a non-autonomous stochastic neutral differential equation of order $0<\gamma\leq1$ with infinite delay in an abstract space. We defined the mild solution using the probability density function, and then we examined the existence of mild solution and controllability of fractional stochastic system using the measure of non-compactness and the $k$-set contractive mapping. This work can be extended for the fractional-order non-autonomous system with variable orders.

\noindent \textbf{Acknowledgment}
The first and third authors acknowledge UGC, India, for providing financial support through MANF F.82-27/2019 (SA-III)/191620066959 and F.82-27/2019 (SA-III)/ 4453, respectively.

\end{document}